\documentclass[12pt]{amsart}

\usepackage[english]{babel}
\usepackage[utf8x]{inputenc}
\usepackage[T1]{fontenc}

\usepackage[margin=1.5in]{geometry}

\usepackage{amsmath}
\usepackage{amstext}
\usepackage{amsfonts}
\usepackage{amssymb}
\usepackage{amsthm}
\usepackage{amsrefs}
\usepackage{mathtools}
\usepackage{dsfont}
\usepackage{color}
\usepackage{graphicx}
\usepackage[colorinlistoftodos]{todonotes}
\usepackage[colorlinks=true, allcolors=blue]{hyperref}
\usepackage{float}
\usepackage{mathrsfs}
\allowdisplaybreaks

\newtheorem{theorem}{Theorem}

\newtheorem{prop}[theorem]{Proposition}

\newtheorem{cor}[theorem]{Corollary}

\newtheorem{thm}[theorem]{Theorem}
\newtheorem{lem}[theorem]{Lemma}

\newtheorem{theo}{Theorem}

\newtheorem{coro}{Corollary}

\newtheorem*{cor*}{Corollary}
\newtheorem*{thm*}{Theorem}
\newtheorem*{lem*}{Lemma}

\newtheorem*{prop*}{Proposition}

\theoremstyle{definition}

\newtheorem{defn}[theorem]{Definition}

\newtheorem{ex}[theorem]{Example}
\newtheorem{question}{Question}

\newtheorem*{defn*}{Definition}

\theoremstyle{remark}

\newcommand{\pr}{\operatorname{Prob}}

\newcommand{\act}{\curvearrowright}

\newcommand{\cX}{\mathcal{X}}

\newcommand{\bN}{{\mathbb{N}}}
\newcommand{\bZ}{{\mathbb{Z}}}

\newcommand{\bR}{{\mathbb{R}}}

\newcommand{\g}{\gamma}

\newcommand{\om}{\omega}

\newcommand{\id}{\operatorname{id}}

\newcommand{\Sub}{\operatorname{Sub}}

\newcommand{\C}{\mathbb C}
\newcommand{\G}{\Gamma}

\newcommand{\ep}{\varepsilon}

\newcommand{\one}{\mathds{1}}

\usepackage[nodisplayskipstretch]{setspace}

\allowdisplaybreaks

\title[]{Growth conditions for topological freeness}

\author[N. Alam]{Nazmul Alam}
\address{Nazmul Alam, University of Houston\\ USA}
\email{malam24@cougarnet.uh.edu}

\author[J. Gondek]{Joseph Gondek}
\address{Joseph Gondek, University of Oxford\\ UK}
\email{joseph.gondek@stx.ox.ac.uk}

\author[M. Kalantar]{Mehrdad Kalantar}
\address{Mehrdad Kalantar\\ University of Oxford\\ UK}
\email{mehrdad.kalantar@maths.ox.ac.uk}

\author[R. Pham]{Randy Pham}
\address{Randy Pham, University of Oxford\\ UK}
\email{randy.pham@trinity.ox.ac.uk}

\date{}

\begin{document}

\begin{abstract}
Given a finitely generated group $\G$, a non-trivial element $g\in \G$, and a minimal action $\G\act \mathcal{X}$ on a compact space $\mathcal{X}$, with amenable neighborhood stabilizers, we prove sufficient conditions in terms of various growth/decay functions for topological freeness of the action of $g$ on $\mathcal{X}$. We apply our results to the case of the Furstenberg boundary action of $\G$, to conclude \textup{C}$^*$-simplicity under (sub-)rapid decay conditions. In this context, we also give a description of the support of stationary states on the reduced $\textup{C}^*$-algebra of $\G$.
\end{abstract}

\maketitle

\section{Introduction and Main results} 

\let\thefootnote\relax\footnotetext{MK was supported by the NSF grant DMS-2155162, and The Simons Foundation grant SFI-MPS-TSM-00014307.
For the purpose of open access, the authors have applied a CC-BY license to any author accepted manuscript arising from this submission.}

The dynamics of boundary actions of discrete groups encode important information about the various structural properties of $\textup{C}^*$-algebras they generate. Admitting free boundary actions implies simplicity of the reduced $\textup{C}^*$-algebra \cite{KK14}; if the free boundary action admits unique stationary measures, one has further rigidity results for measurable actions of $\G$ \cite{hart-kalantar-1}; and, if $\G$ admits free extreme boundary actions, then its reduced $\textup{C}^*$-algebra has strict comparison \cite{Oz25}. 

We recall from \cite{Furstenberg73} that a compact $\G$-space $\cX$ is called a $\G$-boundary (in the sense of Furstenberg) if for every probability measure $\nu\in \pr(\cX)$ and every $x\in \mathcal{X}$, there is a net $(g_i)$ of elements $g_i\in \G$ such that $g_i\nu\to \delta_x$ in the weak* topology in $\pr(\cX)$.
Every group $\G$ has a unique (up to $\G$-equivariant homeomorphisms) universal $\G$-boundary $\partial_F\G$, in the sense that every $\G$-boundary $\cX$ is a continuous $\G$-equivariant image of $\partial_F\G$ (\cite{Furstenberg73}*{Proposition 4.6}). This universal boundary $\partial_F\G$ is called the Furstenberg boundary of $\G$. 

There are several (indirect) sufficient conditions to ensure freeness of the action $\G\act\partial_F\G$ for a given group $\G$ (e.g. lack of amenable normalish subgroups \cite{BKKO2017} or non-trivial amenable URS \cite{Ken15}, existence of continuous actions with non-amenable rigid stabilizers \cite{LeB-MBon18}, ...).
In this paper, we address this problem rather locally. We give sufficient conditions for the freeness of the action of a single given element $g\in \G$ on $\partial_F\G$; that is, to determine when the fixed-point set ${\rm Fix}(g)=\{x\in \partial_F\G : gx=x\}$ is empty.

In fact, we prove our result in a more general setting, for minimal actions of $\G$ on compact spaces $\mathcal{X}$ such that neighborhood stabilizer subgroups $\G_x^o:=\{g\in \G : gy=y ~ \text{for all $y$ in an open neighborhood of $x$}\}$, are amenable for all $x\in \cX$.

Our conditions are based on comparison of various growth/decay rates.

\begin{defn}\label{SRD}
Let $\G$ be a finitely generated group, with a fixed symmetric generating set. 
For each $n\in\bN$, define
\begin{equation*}
\rho(n) = \max\left\{\frac{\|\lambda(f)\|}{\|f\|_2}: 0\neq f\in \C\Gamma,\, {\rm supp}(f)\subseteq B_n(\G)\right\} .
\end{equation*}
Given a subset $A\subseteq\G$, we denote its growth function by
\begin{equation*}\alpha_{\scriptscriptstyle A}(n) = |A\cap B_n(\G)| ,
\end{equation*}
and given a subgroup $H\le\G$, we denote its \emph{co-growth} function by
\begin{equation*}
\beta_{\scriptscriptstyle H}(n) = |B_n(G/H)| ,\end{equation*}
where $B_n(G/H)$ is the ball of radius $n$ around the trivial coset in the Schreier graph $G/H$.
\end{defn}

The group $\G$ is said to have the \emph{rapid decay property} (RD) if $\rho(n)$ grows polynomially \cite{joli}. 
We say $\G$ has \emph{sub-rapid decay} (abbreviated SRD) if $\rho(n)$ grows subexponentially.  
Any group with intermediate growth has SRD but not RD, and the direct product of an SRD group and an RD group is SRD \cite{KS2025}*{Lemma 1}. (See also \cite{SriGreg-SRD} for more details on this property and more examples.)

Our first main result is a sufficient condition for topological freeness of the action of an element $g\in \G$ in terms of the above rates. Given functions $\phi, \psi: \bN\to\bR$, we write $\phi=o(\psi)$ if $\lim_{n\to\infty}\phi(n)/\psi(n)= 0$ and we write $\phi=\underline{o}(\psi)$ if $\liminf_{n\to\infty}\phi(n)/\psi(n)= 0$.

\begin{theo}\label{thm:freeness}
Let $\G$ be a finitely generated group, let $g\in\Gamma$ be a non-central element, and denote by $C=C_\G(g)$ the centralizer of $g$ in $\G$. If 
\begin{equation}\label{eqn}
\lim_n\rho(3n)^2\,\alpha_{\scriptscriptstyle C\,\cap\, \g C\g^{-1}}(2n) /\alpha_{\scriptscriptstyle C}(n) = 0 \tag{$\star$}
\end{equation}
for all $\g\notin C$,
then $g$ acts topologically freely on every compact minimal $\Gamma$-space $\mathcal{X}$ with amenable neighborhood stabilizers.
\end{theo}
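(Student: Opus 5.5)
The plan is to argue by contradiction. We play a lower bound on a state, coming from amenability of neighborhood stabilizers, against the upper bound on $\|\lambda(f)\|$ given by $\rho$. Suppose $g$ does not act topologically freely on $\cX$. Then there is a nonempty open set $U\subseteq{\rm Fix}(g)$, so $g\in\G_x^o$ for every $x\in U$.

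\textbf{Step 1 (an element $\g$ and two conjugates of $g$).} I first need some $\g\notin C$ such that $g$ and $\g g\g^{-1}$ both fix a common nonempty open set. Since $g$ is non-central, some $\g_0$ satisfies $\g_0 g\g_0^{-1}\neq g$. By minimality, $\G U$ covers $\cX$, and I expect to be able to move the set $\g_0 U$ so that it meets $U$ while keeping the conjugate different from $g$. This should yield $\g\notin C$ with $V:=U\cap\g U\neq\emptyset$. Both $g$ and $\g g\g^{-1}$ then fix $V$ pointwise.

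\textbf{Step 2 (the state).} Fix $y\in V$. Then $H=\G_y^o$ is amenable, so $1_H$ extends to a state $\tau$ on $\textup{C}^*_r(\G)$ with $\tau(\lambda_h)=1$ for $h\in H$. For $c\in C$ we have $c g c^{-1}=g$. The elements I use are
\[
h_c:=(c\g)g(c\g)^{-1},\qquad c\in C\cap B_n(\G).
\]
Each $h_c$ fixes $c\g U$ pointwise. Two such elements coincide if and only if $c^{-1}c'\in \g C\g^{-1}$. Hence each value is attained at most $\alpha_{\scriptscriptstyle C\cap\g C\g^{-1}}(2n)$ times, and every $h_c$ lies in $B_{2n+2|\g|+|g|}(\G)\subseteq B_{3n}(\G)$ for $n$ large.

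\textbf{Step 3 (the estimate).} Let $f=\sum_{c\in C\cap B_n}\delta_{h_c}$. By the multiplicity count in Step 2, $\|f\|_2^2\le\alpha_{\scriptscriptstyle C}(n)\,\alpha_{\scriptscriptstyle C\cap\g C\g^{-1}}(2n)$. If every $h_c$ satisfies $\tau(\lambda_{h_c})=1$, then
\[
\alpha_{\scriptscriptstyle C}(n)=\tau(\lambda(f))\le\|\lambda(f)\|\le \rho(3n)\,\|f\|_2 .
\]
Squaring gives $\alpha_{\scriptscriptstyle C}(n)\le\rho(3n)^2\,\alpha_{\scriptscriptstyle C\cap\g C\g^{-1}}(2n)$ for all large $n$, which contradicts $(\star)$.

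\textbf{Main obstacle.} The hard part is arranging $\tau(\lambda_{h_c})=1$, or at least $\ge\delta>0$ uniformly, for all $c$ at once. The sets $c\g U$ on which the $h_c$ act trivially need not share a common point. My first attempt would be to use $\tau(\lambda_{h_c})=\tau_c(\lambda_g)$ with $\tau_c=\tau\circ{\rm Ad}(\lambda_{c\g})$, and replace $\tau$ by a weak$^*$ limit of averages of the translated states attached to points of $c\g V$. I would then exploit that $C$ preserves ${\rm Fix}(g)$ and use minimality to control the translates. Failing that, I would use Cauchy--Schwarz, $|\tau(\lambda_{s}\lambda_k)-\tau(\lambda_s)|\le\|\lambda_k-1\|_\tau$, to show that the relevant values stay bounded below. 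This loses only a constant, which $(\star)$ absorbs.
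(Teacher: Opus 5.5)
Your Steps 2 and 3 give the right upper bound. The estimate $\tau(\lambda(f))\le\rho(3n)\|f\|_2$, with your multiplicity count via $C\cap\g C\g^{-1}$, is in substance the paper's bound $\|\mu*\lambda_{g_j}\|\le\rho(3N)|A|^{-1/2}$. The gap is the lower bound, which you flag as the main obstacle: it is the whole content of the theorem, and the state you propose cannot supply it.

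\textbf{Why your state fails.} For $\tau$ the extension of $\one_{\G_y^o}$, the equality $\tau(\lambda_{h_c})=1$ says that every $h_c$ fixes a neighbourhood of the single point $y$. You only know that $h_c$ fixes $c\g U$. An element $c\in C$ preserves $\textup{Fix}(g)$ but not $\textup{Fix}(\g g\g^{-1})=\g\,\textup{Fix}(g)$, so nothing puts $y$ in $c\g U$ for $c\neq e$. In fact the property is incompatible with $(\star)$ regardless of $U$. An amenable $H\le\G$ satisfies $|H\cap B_k(\G)|\le\rho(k)^2$, whereas under $(\star)$ your $h_c$ eventually give more than $\rho(3n)^2$ distinct elements of $B_{3n}(\G)$. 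The same holds for any single state that is uniformly $\ge\delta$ on all $h_c$: your Step 3 would then contradict $(\star)$ without ever using $U$. So producing such a state is not an intermediate step, and the mere fact that $g$ fixes an open set gives no route to it. Your fallbacks do not close this. The Cauchy--Schwarz estimate needs $\tau(\lambda_k)\approx 1$ for the elements $k$ relating different $h_c$, which is the same problem. Averaging point-states over $c$ guarantees each $h_c$ weight only about $1/\alpha_{\scriptscriptstyle C}(n)$.

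\textbf{The missing mechanism.} The paper obtains only an \emph{averaged} lower bound, and it gets it from compactness together with stationarity.
\begin{enumerate}
\item Set $V=\mathrm{int}\,\textup{Fix}(g)$. Minimality and compactness give a finite cover $\cX=\bigcup_{j=1}^m h_jV$.
\item For any $\nu\in\pr(\cX)$, the state $\tau=\nu\circ\psi$ from Proposition~\ref{scarparo} satisfies $\tau(\lambda_h)=\nu(\mathrm{int}\,\textup{Fix}(h))$. Hence $\sum_j\tau(\lambda_{g_j})\ge 1$, where $g_j=h_jgh_j^{-1}$.
\item Take $\nu$ to be $\mu$-stationary for $\mu$ uniform on a large finite set $A$. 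Then $\tau$ is $\mu$-stationary, so $\tau(\lambda_{g_j})=\tau(\mu*\lambda_{g_j})$. This turns a single value into an average over conjugates, and your $\ell^2$ bound applies to that average.
\item The result is $\tau(\lambda_{g_j})<1/m$ for every $j$, which contradicts the cover.
\item The set $A$ must lie in $C_0=h_0Ch_0^{-1}$ with $h_0C\neq h_jC$ for all $j$. This is possible because $(\star)$ forces $[\G:C]=\infty$. It ensures that every $g_j$, not just one conjugate $\g g\g^{-1}$, has many distinct $A$-conjugates.
\end{enumerate}
With your single $\g$ and conjugation by $C$ itself, the cover may include translates $h_jV$ with $h_j\in C$. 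On those pieces conjugation by $C$ is trivial and no decay is available.
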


Theorem \ref{thm:freeness} has an interpretation in the language of Glasner and Weiss' \textit{uniformly recurrent subgroups} (URS's), introduced in \cite{gw_urs}.

\begin{coro}\label{urs}
Let $\Gamma$ be a finitely generated group, and let $g\in\Gamma$ satisfy the hypotheses of Theorem \ref{thm:freeness}. If $\mathcal{H}\subset \Sub(\Gamma)$ is an amenable URS, then $g\not\in H$ for all $H\in\mathcal{H}$.
\end{coro}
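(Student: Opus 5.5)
We reduce Corollary \ref{urs} to Theorem \ref{thm:freeness} by passing from the URS to a compact minimal $\G$-space whose neighborhood stabilizers are amenable. Let $\mathcal{H}\subset\Sub(\G)$ be an amenable URS, i.e.\ a closed, $\G$-invariant, minimal subset of $\Sub(\G)$ (Chabauty topology, with $\G$ acting by conjugation) consisting of amenable subgroups. Take $\cX=\mathcal{H}$ with the conjugation action. It is compact, being closed in the compact space $\Sub(\G)$, and minimal by definition of a URS, so the only hypothesis of Theorem \ref{thm:freeness} left to check is the amenability of neighborhood stabilizers.

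For $H\in\mathcal{H}$ we need $\G^o_H$ amenable, and the natural plan is to show $\G^o_H\le H$. Since $\G$ is countable, the Chabauty topology on $\Sub(\G)$ is the product topology on $\{0,1\}^\G$. Suppose $h\in\G^o_H$, so $h$ normalizes every $K$ in some neighborhood $U$ of $H$ in $\mathcal{H}$. This alone does not force $h\in K$. I would therefore not aim for the containment $\G^o_H\le H$ directly. Instead I would use the standard fact, due to Glasner--Weiss and used by Kennedy \cite{Ken15}, that for a URS there is a canonical minimal $\G$-space $\cX$ (for instance the closure of generic points in a minimal space realizing $\mathcal{H}$ via the stabilizer map) whose stability system is $\mathcal{H}$. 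By semicontinuity, at every point of continuity $x$ of $x\mapsto\G_x$, which is a dense $G_\delta$ set, we have $\G_x^o=\G_x\in\mathcal{H}$. For general $x$, the group $\G^o_x$ is contained in $\G_y^o$ for all $y$ near $x$, and in particular in some $\G_y$ with $y$ a continuity point. Hence $\G_x^o$ is contained in a member of $\mathcal{H}$ and is amenable.

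Next apply Theorem \ref{thm:freeness} on this $\cX$. The element $g$ acts topologically freely, so $\mathrm{Fix}(g)$ has empty interior. Now suppose, for contradiction, that $g\in H$ for some $H\in\mathcal{H}$. The set $\{K\in\mathcal{H}: g\in K\}$ is clopen in $\mathcal{H}$ and nonempty. Under the stabilizer map $x\mapsto\G_x$, restricted to the continuity points $Y$ (a dense set mapping onto a dense subset of $\mathcal{H}$, in fact onto $\mathcal{H}$ by minimality and closedness), the preimage of this clopen set meets $Y$. By continuity at such a point $y$, there is an open $V\ni y$ in $\cX$ with $g\in\G_z$ for all $z\in V$, because $\G_z^o=\G_y$ near $y$. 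Thus $V\subseteq\mathrm{Fix}(g)$, contradicting topological freeness. Hence $g\notin H$ for all $H\in\mathcal{H}$.

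The main obstacle I expect is the identification step: producing a minimal space whose generic stabilizers equal $\mathcal{H}$, and checking that every neighborhood stabilizer embeds in a member of $\mathcal{H}$. If the paper recalls the Glasner--Weiss correspondence, I would cite it there. Otherwise I would construct $\cX$ as the orbit closure of the diagonal point in $\mathcal{H}\times$ (a suitable space) or simply cite \cite{gw_urs}.
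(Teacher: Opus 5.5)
Your route is sound but differs from the paper's. The paper applies Matte Bon--Tsankov \cite{urs} to realize the amenable URS $\mathcal{H}$ as the set of \emph{all} stabilizers of a compact minimal $\Gamma$-space $\mathcal{X}$ whose stabilizer map is continuous everywhere. Continuity gives $\Gamma_x^o=\Gamma_x\in\mathcal{H}$ for every $x$, so Theorem~\ref{thm:freeness} applies at once. It also makes $\mathrm{Fix}(g)=\{x: g\in\Gamma_x\}$ open, so topological freeness of $g$ is the same as freeness.

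You use only the weaker realization of $\mathcal{H}$ as a stability system, i.e.\ the closure of stabilizers at continuity points. You attribute this to \cite{gw_urs}; it is in any case implied by \cite{urs}. You pay for the weaker input with two extra arguments: amenability of $\Gamma^o_x$ at non-generic points, and the density argument producing a continuity point $y$ with $g\in\Gamma_y$, which puts a neighbourhood of $y$ inside $\mathrm{Fix}(g)$. That second argument is correct. Your version shows slightly more: for any compact minimal space whose stability system is an amenable URS, neighbourhood stabilizers are amenable, and topological freeness of $g$ forces $g$ to avoid every member of the stability system. The paper's proof is a one-liner only because it leans on the stronger realization theorem.

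One step needs repair. The assertion that $\Gamma^o_x\le\Gamma^o_y$ for all $y$ near $x$ is false in general. Each $h\in\Gamma^o_x$ fixes its own neighbourhood of $x$, and when $\Gamma^o_x$ is not finitely generated these neighbourhoods can shrink to $x$. Argue with finite sets instead:
\begin{itemize}
\item For finite $F\subseteq\Gamma^o_x$ there is a neighbourhood $U_F$ of $x$ fixed pointwise by $F$.
\item Pick a continuity point $y_F\in U_F$. Continuity points are dense, since they form the complement of the countable union of the closed nowhere dense sets $\partial\,\mathrm{Fix}(h)$, and Baire applies.
\item Then $\langle F\rangle\le\Gamma_{y_F}\in\mathcal{H}$ is amenable.
\item $\Gamma^o_x$ is the directed union of such subgroups, hence amenable. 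Alternatively, any cluster point of $(\Gamma_{y_F})_F$ in the closed set $\mathcal{H}$ contains $\Gamma^o_x$.
\end{itemize}

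Two smaller points in the last step:
\begin{itemize}
\item Your claim that the continuity points map \emph{onto} $\mathcal{H}$ is unjustified; their image is invariant but need not be closed. You only need density, which you have.
\item The reason $g\in\Gamma_z$ for $z$ near $y$ is not that stabilizers are locally equal to $\Gamma_y$. It is that $\{K: g\in K\}$ is an open neighbourhood of $\Gamma_y$ in $\Sub(\Gamma)$ and the stabilizer map is continuous at $y$.
\end{itemize}
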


Applying Theorem \ref{thm:freeness} to groups with (sub-)rapid decay, we get

\begin{coro}\label{cor:srd}
Let $\G$ be a finitely generated non-amenable group with sub-rapid decay. Let $g\in\G$ be a non-central element, and denote $C=C_\G(g)$. 
If $C\,\cap\, \g C\g^{-1}$ grows subexponentially for all $\g\notin C$,
then $g$ acts topologically freely on any compact minimal $\G$-space $\mathcal{X}$ with amenable neighborhood stabilizers.
\end{coro}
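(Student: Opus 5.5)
The plan is to deduce Corollary \ref{cor:srd} directly from Theorem \ref{thm:freeness}. It suffices to check condition \eqref{eqn} for every $\g\notin C$. Fix such a $\g$. By hypothesis, $\rho$ grows subexponentially. The subgroup $D:=C\cap \g C\g^{-1}$ also grows subexponentially, which means $\alpha_D(2n)$ is subexponential in $n$. Hence the numerator $\rho(3n)^2\alpha_D(2n)$ is subexponential. It therefore suffices to show that the denominator $\alpha_C(n)$ grows exponentially, i.e.\ $\alpha_C(n)\ge c\,\lambda^n$ for some $\lambda>1$ and $c>0$. Once that holds, the ratio tends to $0$, since a subexponential function divided by an exponential one tends to zero.

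The key step is exponential growth of the centralizer $C$, and I would reach it by a dichotomy. First suppose $C$ grows subexponentially. Then $D\le C$ does too, and I would argue that $C$ is amenable. Subexponential growth in the ambient word metric does give amenability: for any finitely generated subgroup $C_0\le C$, the word length on $C_0$ dominates a constant multiple of the ambient length. So $C_0$ has subexponential intrinsic growth and is amenable, and $C$, being the directed union of such subgroups, is amenable.

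The next move is to use sub-rapid decay to rule this case out, unless $C$ is "large" in some sense. Here I would apply sub-rapid decay itself. For a subgroup $H$ with $\|\lambda(\one_{H\cap B_n})\|$ comparable to $|H\cap B_n|$, the bound $\|\lambda(f)\|\le\rho(n)\|f\|_2$ gives $|H\cap B_n|\le \rho(n)|H\cap B_n|^{1/2}$. If $H$ is amenable, this comparability holds by a Følner-type argument, and then $\alpha_H(n)\le\rho(n)^2$, so amenable subgroups grow subexponentially. That by itself does not contradict anything. So amenability of $C$ is not obviously contradictory, and the real content must be to avoid needing exponential growth of $C$ at all.

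So I would instead settle for the weaker form $\underline{o}$, or look for a bound $\alpha_D(2n)/\alpha_C(n)\to0$ directly, for instance by showing $[C:D]=\infty$. Indeed $D=C\cap C_\G(\g g\g^{-1})$, and if $\g\notin C$ then $\g g\g^{-1}\ne g$. Still, finite index is possible, so this route is delicate.

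The main obstacle is therefore making the denominator beat the numerator. My first attempt would be to show that $C$ contains a free subsemigroup, or to show that $C$ is non-amenable. The idea would be this: if $C$ were amenable, then the intersections $C\cap\g C\g^{-1}$ together with $C$ would form an amenable normalish-type configuration contradicting the non-amenability of $\G$ under SRD. Non-amenability of $C$ then gives exponential growth, and the argument concludes as above.
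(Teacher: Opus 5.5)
Your treatment of the exponential-growth case matches the paper. So does your argument that subexponential growth of $C$ in the ambient metric makes $C$ amenable. The gap is what you do next: the amenable case cannot be ruled out, and Theorem~\ref{thm:freeness} cannot handle it at all.

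A counterexample to your final plan: take $\G=F_2=\langle a,b\rangle$ and $g=a$. Then $C=\langle a\rangle\cong\bZ$ and $C\cap\g C\g^{-1}=\{e\}$ for every $\g\notin C$, so all hypotheses of the corollary hold. Yet $C$ is amenable, has linear growth, and contains no free subsemigroup of rank two. So trying to prove $C$ non-amenable aims at a false statement.

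Your own estimate shows the obstruction in general. If $C$ is amenable, then $\alpha_C(m)\le\rho(m)^2$ for all $m$. Since $\rho$ is nondecreasing and $D=C\cap\g C\g^{-1}$ contains $e$, this gives
\[
\frac{\rho(3n)^2\,\alpha_{D}(2n)}{\alpha_C(n)}\;\ge\;\frac{\alpha_C(3n)}{\alpha_C(n)}\;\ge\;1\qquad\text{for all } n .
\]
Hence \eqref{eqn} fails whenever $C$ is amenable, even in the $\underline{o}$ form. No information about $[C:D]$ can rescue this.

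The missing idea is to treat the amenable case by the stationary-state/random-walk route. This is also where the non-amenability of $\G$ is actually used; your proposal never uses it effectively. The steps are:
\begin{enumerate}
\item If $C$ is amenable and $\G$ is not, then $C$ is not co-amenable in $\G$, because an amenable co-amenable subgroup would make $\G$ amenable.
\item So the Schreier graph $\G/C$ is non-amenable. For a symmetric, finitely supported, generating $\mu$, Kesten's criterion gives $\|\lambda_{\G/C}(\mu)\|<1$, and hence $\kappa_{2n}\le\|\lambda_{\G/C}(\mu)\|^{2n}$ decays exponentially.
\item Since $\rho$ is subexponential, $\rho(2nN+|g|)\,\kappa_{2n}^{1/4}\to0$. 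By Theorem~\ref{thm:Gong}, every $\mu$-stationary state on $\textup{C}^*_r(\G)$ vanishes at $g$.
\item Lemma~\ref{lem:station-topfree} then gives topological freeness.
\end{enumerate}
This is Corollary~\ref{cor:Oles}. The paper's proof is exactly the dichotomy: amenable (subexponential) $C$ via Corollary~\ref{cor:Oles}, and exponentially growing $C$ via Theorem~\ref{thm:freeness}.

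To make the dichotomy airtight, split according to whether $\liminf_n\alpha_C(n)^{1/n}$ equals $1$ or exceeds $1$. In the first case your argument still yields amenability, because the intrinsic growth rate of each finitely generated subgroup of $C$ is a genuine limit and can be computed along a subsequence. The second case gives the full limit required in \eqref{eqn}.
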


\medskip

A general sufficient condition for topological freeness of the action of $g\in\G$ on any compact minimal $\Gamma$-space $\mathcal{X}$ with amenable neighborhood stabilizers is that for some $\mu\in \pr(\G)$, and every $\mu$-stationary state $\tau$ on $\textup{C}^*_r(\G)$, $\tau(\lambda_g)=0$ (see Lemma~\ref{lem:station-topfree} below). The notion of stationary state was studied in \cite{hart-kalantar-1}, where a
characterization of $\textup{C}^*$-simplicity in terms of unique stationarity was given: namely, it was proven that a group $\G$ is $\textup{C}^*$-simple if and only if for some probability measure $\mu\in {\rm Prob}(\G)$, the canonical trace is the unique $\mu$-stationary state on $\textup{C}^*_r(\G)$. In contrast to the case of traces, there is no general (purely group theoretic) description of stationary states in terms of their support.

Our next result gives such a description in terms of the random walks on Schreier graphs with respect to centralizers of elements of $\Gamma$. This is natural, since $\mu$-stationary measures and states are essentially objects associated to the random walk defined by $\mu$ \cite{Furstenberg73}.

\begin{theo}\label{thm:Gong}
Let $\G$ be a finitely generated group, let $\mu\in{\rm Prob}(\G)$ be a symmetric measure supported on the ball of radius $N$. Let $g\in\Gamma$, and denote by $\kappa_n$ the probability of the return to the origin after $n$ steps in the $\mu$-random walk on the Schreier graph $\G/C_\G(g)$.
If 
\[
\liminf_n \rho\big(2nN+|g|\big)\,\kappa_{2n}^{1/4} = 0 ,
\]
then every $\mu$-stationary state on $\textup{C}^*_r(\G)$ vanishes at $g$.
\end{theo}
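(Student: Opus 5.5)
Let $\tau$ be a $\mu$-stationary state on $\textup{C}^*_r(\G)$, so $\tau=\sum_h\mu(h)\,\tau(\lambda_h^*\,\cdot\,\lambda_h)$. Iterating $2n$ times gives $\tau=\mu^{*2n}*\tau$, and in particular
\[
\tau(\lambda_g)=\sum_{h}\mu^{*2n}(h)\,\tau(\lambda_{h^{-1}gh}).
\]
The right-hand side is $\tau(\lambda(f_n))$ with $f_n=\sum_h\mu^{*2n}(h)\,\delta_{h^{-1}gh}$. This $f_n$ is supported on conjugates of $g$ by elements of length at most $2nN$, so it lies in $B_{4nN+|g|}$. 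The length bookkeeping has to match the stated argument $2nN+|g|$, so I would rather split $h=h_1h_2$ with $h_1,h_2$ each distributed according to $\mu^{*n}$ and keep one factor outside the functional; I come back to this below. The plan is to bound $|\tau(\lambda_g)|\le\|\lambda(f_n)\|\le\rho(\cdot)\,\|f_n\|_2$ and then control $\|f_n\|_2$ by return probabilities.

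\textbf{The $\ell^2$ estimate.} The coefficient of $f_n$ at a conjugate $kgk^{-1}$ is the $\mu^{*2n}$-mass of the coset $C h_0$ with $h_0^{-1}gh_0=kgk^{-1}$, where $C=C_\G(g)$. In other words it is the distribution $p_{2n}$ of the projected random walk on $C\backslash\G$, which is the Schreier graph $\G/C$ after inverting, using that $\mu$ is symmetric. Hence
\[
\|f_n\|_2^2=\sum_x p_{2n}(x)^2 = p_{4n}(o),
\]
using symmetry of the Schreier walk, since the counting measure on cosets is reversible for a symmetric $\mu$. This gives $\|f_n\|_2=\kappa_{4n}^{1/2}$, which would produce the exponent $1/2$ with $\rho(4nN+|g|)$. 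The stated exponent $1/4$ and radius $2nN+|g|$ suggest that the authors instead use a Cauchy--Schwarz step. I would write $\tau(\lambda_g)=\sum_h\mu^{*n}(h)\tau(\lambda_h^*\lambda_g\lambda_h)$ with $\tau$ a state, and pass to the GNS triple $(\pi,\xi)$. Then
\[
\tau(\lambda_g)=\Big\langle \sum_h\mu^{*n}(h)\,\pi(\lambda_{h^{-1}g h})\xi,\xi\Big\rangle ,
\]
and I would bound this by $\|\lambda(f)\|^{1/2}$-type quantities. Applying Cauchy--Schwarz to $\tau(x)$ gives $|\tau(x)|^2\le\tau(x^*x)$, and $\tau(x^*x)\le\|\lambda(x^*x)\|\le\rho(\text{radius})\|x^*x\|_2$. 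Here $\|x^*x\|_2\le\|x\|_2^{\,2}$ is not right in general, but one can use $\|x^*x\|_2\le\|x\|_1\|x\|_2$. Together with $\|x\|_2^2=\kappa_{2n}$ and $\|x\|_1=1$, this yields $|\tau(\lambda_g)|^2\le\rho(\cdot)\,\kappa_{2n}^{1/2}$. That is, $|\tau(\lambda_g)|\le\rho^{1/2}\kappa_{2n}^{1/4}\le\rho\,\kappa_{2n}^{1/4}$ when $\rho\ge1$, matching the exponent $1/4$. The support of $x^*x$ consists of products $(h^{-1}gh)^{-1}(k^{-1}gk)$ with $h,k$ of length at most $nN$. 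To get the radius $2nN+|g|$ I would rewrite $x^*x$ as $\sum\mu^{*n}(h)\mu^{*n}(k)\,\lambda_{h^{-1}g^{-1}hk^{-1}gk}$. Since $\tau$ itself is stationary, I can conjugate by one more $\mu^{*n}$ step, or use that $\tau(\lambda_a^*y\lambda_a)$ averages, to strip the outer $h$ and $k$. This reduces to an element of length at most $2nN+|g|$, up to adjusting which factors sit inside.

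\textbf{Main obstacle.} The main obstacle is precisely this bookkeeping: arranging the Cauchy--Schwarz and stationarity so that the operator whose reduced norm is estimated is supported in $B_{2nN+|g|}$ while its $\ell^2$ norm is still $\kappa_{2n}^{1/2}$. Once that is in place, $|\tau(\lambda_g)|\le\rho(2nN+|g|)\,\kappa_{2n}^{1/4}$ holds for all $n$, and taking the liminf gives $\tau(\lambda_g)=0$.
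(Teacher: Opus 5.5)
\textbf{There is a genuine gap in the route you finally commit to.} Take $x=\sum_h\mu^{*n}(h)\lambda_{h^{-1}gh}$. It is supported in $B_{2nN+|g|}$, so $x^*x$ is supported in $B_{4nN+2|g|}$. Your Cauchy--Schwarz argument therefore only gives $|\tau(\lambda_g)|\le\rho(4nN+2|g|)^{1/2}\kappa_{2n}^{1/4}$.

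The proposed repair, using stationarity to ``strip the outer $h$ and $k$'' from $h^{-1}g^{-1}hk^{-1}gk$, does not work. Stationarity only lets you replace $\tau(y)$ by an average of $\tau(\lambda_a^*y\lambda_a)$, i.e.\ conjugate the whole element by a single $a$. That cannot cancel two independent outer factors $h\neq k$. Without an inequality such as $\rho(2r)\le\rho(r)^2$, which you have not established, this bound cannot be compared with the hypothesis.

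Your first attempt has a milder mismatch. Iterating $2n$ times gives $\rho(4nN+|g|)\,\kappa_{4n}^{1/2}$. This only controls the even-indexed terms of the sequence in the hypothesis, and the $\liminf$ there might be attained along odd indices.

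\textbf{The fix is already in your own computations, and it is the paper's proof.} Iterate stationarity only $n$ times, so that $\tau(\lambda_g)=\tau(x)$. Then apply the norm bound directly to $x$ rather than to $x^*x$:
\[
|\tau(\lambda_g)|\le\|x\|\le\rho(2nN+|g|)\,\|x\|_2=\rho(2nN+|g|)\,\kappa_{2n}^{1/2}\le\rho(2nN+|g|)\,\kappa_{2n}^{1/4}.
\]
Here the equality is your identity $\|x\|_2^2=\sum_{y\in\G/C}p_n(y)^2=\kappa_{2n}$, which holds by symmetry of the Schreier walk, and the last step uses $\kappa_{2n}\le 1$. This holds for every $n$, so taking the $\liminf$ finishes the proof.

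The exponent $1/4$ in the statement does not signal a Cauchy--Schwarz step. The paper follows exactly this direct route, but it estimates $\|q_*(\mu^{*n})\|_2\le\|q_*(\mu^{*n})\|_\infty^{1/2}$ and then $\|q_*(\mu^{*n})\|_\infty\le\kappa_{2n}^{1/2}$. That loses a square root compared with your exact computation $\|q_*(\mu^{*n})\|_2=\kappa_{2n}^{1/2}$. So the corrected version of your argument actually gives the sharper bound $\rho(2nN+|g|)\,\kappa_{2n}^{1/2}$.
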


Again, we record the conclusion in the case of groups with the (sub-)rapid decay property.

\begin{coro}\label{cor:stn}
Let $\Gamma$ be a group with \textup{RD} \textup{(}resp. \textup{SRD}\textup{)}, let $\mu\in{\rm Prob}(\G)$ be finitely supported and symmetric, and fix $g\in\Gamma$. If the probability of return to the origin in the $\mu$-random walk on the Schreier graph $\G/C_\G(g)$ decays superpolynomially \textup{(}resp. exponentially\textup{)}, then every $\mu$-stationary state on $\textup{C}^*_r(\G)$ vanishes at $g$. 
\end{coro}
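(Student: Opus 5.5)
This is a direct specialization of Theorem~\ref{thm:Gong}, so the plan is just to verify its hypothesis under RD (resp. SRD). Since $\mu$ is finitely supported, fix $N$ with ${\rm supp}(\mu)\subseteq B_N(\G)$. The element $g$ is fixed, so $|g|$ is a constant. It therefore suffices to show that the quantity $\rho(2nN+|g|)\,\kappa_{2n}^{1/4}$ tends to $0$ along the full sequence; then the $\liminf$ in Theorem~\ref{thm:Gong} vanishes and every $\mu$-stationary state vanishes at $g$.

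\textbf{RD case.} By definition there are constants $D,k$ with $\rho(m)\le D(1+m)^k$. Hence
\[
\rho(2nN+|g|)\le D'(1+n)^k
\]
for some constant $D'$ depending on $N$ and $|g|$. The hypothesis says $\kappa_n$ decays superpolynomially, meaning $\kappa_n(1+n)^{j}\to0$ for every $j$. Taking $j=4k$ along the even times gives $\kappa_{2n}^{1/4}(1+n)^k\to0$, so the product tends to $0$.

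\textbf{SRD case.} Exponential decay gives $c\in(0,1)$ and $A>0$ with $\kappa_n\le Ac^n$. Then $\kappa_{2n}^{1/4}\le A^{1/4}c^{n/2}$, which is exponential decay in $n$. Subexponential growth of $\rho$ means that for every $\epsilon>0$ we have $\rho(m)\le e^{\epsilon m}$ for all large $m$. Apply this with $m=2nN+|g|$ to get
\[
\rho(2nN+|g|)\le e^{\epsilon(2nN+|g|)}.
\]
Choose $\epsilon$ small enough that $2\epsilon N<\tfrac12\log(1/c)$. Then the product is bounded by a constant times $\big(e^{2\epsilon N}c^{1/2}\big)^n\to0$.

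\textbf{Main care point.} There is no real obstacle; the only subtlety is interpreting the decay hypotheses along even times. For a symmetric random walk, return probabilities at odd times may vanish, and only $\kappa_{2n}$ enters the criterion. I would state the decay assumption as applying to $\kappa_{2n}$, or note that a bound along all $n$ in particular holds along even $n$, so nothing further is needed.
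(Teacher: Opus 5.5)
Your proposal is correct and takes the same route as the paper, whose proof simply invokes Theorem~\ref{thm:Gong} after noting that $\rho^k=o(\kappa^{-1})$ for every $k\in\bN$ in both cases. Your write-up adds two details the paper leaves implicit: the rescaled argument $2nN+|g|$ preserves polynomial (resp.\ subexponential) growth of $\rho$, and $\kappa_{2n}^{1/4}$ still decays superpolynomially (resp.\ exponentially).
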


We should remark that our requirement on the rate of return of the random walk in the above statements is not a particularly strong condition. Indeed, in the case of a normal subgroup $H\triangleleft \G$ (that is, the random walk on the Cayley graph of the quotient group $\G/H$), the probability of return to the origin decays polynomially if and only if the quotient group $\G/H$ has polynomial growth. (See \cite{Woess}*{Theorem 6.6} for details.)

\medskip

Finally, note that, in view of \cite{KK14}, the above results give new sufficient conditions for the $\textup{C}^*$-simplicity of discrete groups with (sub-)rapid decay.

\subsection*{Traces on \texorpdfstring{$\textup{C}^*_r(\G)$}{C*r(G)}}
The problem of determining the support of traces $\tau$ on $\textup{C}^*_r(\G)$ is completely settled in \cite{BKKO2017}, where it was proven that every such $\tau$ is supported on the amenable radical $R(\G)$ of $\G$.

Recall that the \textit{amenable radical} of $\Gamma$ is the unique maximal normal amenable subgroup of $\Gamma$, which exists for every group $\G$ \cite{Day}. In \cite{Furm}*{Corollary 8}, Furman showed that the kernel of $\G\act \partial_F\G$ is indeed the amenable radical $R(\G)$.

Thus, $g\in \G$ acts non-trivially on $\partial_F\G$ $\iff$ $g$ does not belong to the amenable radical $\iff$ every trace $\tau$ on $\textup{C}^*_r(\G)$ vanishes on $g$.

In \cite{gong2015}*{Lemma 2.8}, Gong observed that \emph{if $\G$ has rapid decay, and $g\in \G$ is such that its conjugacy class grows superpolynomially, then every trace on $\textup{C}^*_r(\Gamma)$ vanishes on $g$}, 
and used that to give a complete description of traces on the reduced $\textup{C}^*$-algebras of hyperbolic groups. That, however, follows from results of \cite{BKKO2017} as we show below in Corollary~\ref{areqpc}.

In the following lemma, we record a more general version of \cite{gong2015}*{Lemma 2.8}.

\begin{lem}\label{lem:gong}
Let $\G$ be a finitely generated group. If $g\in\Gamma$ is such that 
$\rho^2=\underline{o}(\alpha_{\scriptscriptstyle \textup{Cl}(g)})$, then $\tau(\lambda_g)=0$ for every trace $\tau$ on $\textup{C}^*_r(\Gamma)$.
\end{lem}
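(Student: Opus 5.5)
Given a trace $\tau$ on $\textup{C}^*_r(\G)$, the plan is to use conjugation invariance to average $\lambda_g$ over many conjugates, and then bound the resulting operator with the decay function $\rho$. Since $\tau$ is a trace, $\tau(\lambda_h)=\tau(\lambda_g)$ for every $h\in \textup{Cl}(g)$. For each $n$ I will set $A_n=\textup{Cl}(g)\cap B_n(\G)$, so that $|A_n|=\alpha_{\scriptscriptstyle \textup{Cl}(g)}(n)$, and let $f_n=\one_{A_n}\in\C\G$. Linearity then gives
\[
|A_n|\,|\tau(\lambda_g)| = |\tau(\lambda(f_n))| \le \|\lambda(f_n)\|,
\]
because $\tau$ is a state.

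Next I bound the operator norm using the definition of $\rho$. Since ${\rm supp}(f_n)\subseteq B_n(\G)$, we have $\|\lambda(f_n)\|\le \rho(n)\|f_n\|_2=\rho(n)|A_n|^{1/2}$. Combining this with the previous display gives
\[
|\tau(\lambda_g)|\le \frac{\rho(n)}{|A_n|^{1/2}} = \left(\frac{\rho(n)^2}{\alpha_{\scriptscriptstyle \textup{Cl}(g)}(n)}\right)^{1/2}.
\]

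Finally, the hypothesis $\rho^2=\underline{o}(\alpha_{\scriptscriptstyle \textup{Cl}(g)})$ supplies a subsequence $n_k$ along which the right-hand side tends to $0$. Hence $\tau(\lambda_g)=0$.

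There is no real obstacle in this argument. The only points to check are that $|\tau(x)|\le\|x\|$ for a state and that $A_n$ is nonempty for large $n$, which holds once $n\ge|g|$.
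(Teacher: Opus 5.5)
Your proof is correct and follows essentially the same approach as the paper. The paper also sums $\lambda$ over $A_n=B_n(\G)\cap\textup{Cl}(g)$, uses the trace property to get $|\tau(\lambda_g)|\,|A_n|\le\|z_n\|\le\rho(n)|A_n|^{1/2}$, and concludes by taking the $\liminf$; your remark that $A_n\neq\emptyset$ once $n\ge|g|$ is a detail the paper leaves implicit.
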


\begin{proof}
Set $A_n = B_n(\G)\cap \text{Cl}(g)$, and $z_n = \sum_{h\in A_n}\lambda(h)$.
Let $\tau$ be a trace on $\textup{C}^*_r(\Gamma)$. We have
\[\begin{split}
|\tau(g)||A_n| &= \big|\sum_{h\in A_n}\tau(g)\big| = |\tau(z_n)|\leq \|z_n\|\\&\leq \rho(n)\|z_n\|_2 = \rho(n)|A_n|^{1/2},
\end{split}\]
for all $n\in\bN$, which implies $|\tau(g)|\le \liminf_n \rho(n) |A_n|^{-1/2}= 0$.
\end{proof}

\begin{cor}
If $\G$ has \textup{RD} \textup{(}respectively, \textup{SRD}\textup{)}, then every trace $\tau$ on $\textup{C}^*_r(\Gamma)$ vanishes on elements $g$ whose conjugacy class has superpolynomial growth \textup{(}respectively, exponential growth\textup{)}.
\end{cor}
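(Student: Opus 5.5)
This corollary follows directly from Lemma~\ref{lem:gong}; the only work is to compare growth rates. In either case it suffices to show $\rho^2=\underline{o}(\alpha_{\scriptscriptstyle \textup{Cl}(g)})$ under the stated hypotheses, and then invoke Lemma~\ref{lem:gong} to conclude $\tau(\lambda_g)=0$ for every trace $\tau$ on $\textup{C}^*_r(\G)$.

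\emph{RD case.} Suppose $\rho(n)\le P(n)$ for a polynomial $P$ of degree $d$, say $\rho(n)\le c(1+n)^d$. Superpolynomial growth of $\textup{Cl}(g)$ means that $\alpha_{\scriptscriptstyle \textup{Cl}(g)}(n)$ is not bounded by any polynomial. I read this as $\limsup_n \alpha_{\scriptscriptstyle \textup{Cl}(g)}(n)/n^k=\infty$ for every $k$; the weaker $\limsup$ reading is exactly what the $\underline{o}$ formulation of the lemma tolerates. Take $k=2d+1$ and pick $n_j\to\infty$ with $\alpha_{\scriptscriptstyle \textup{Cl}(g)}(n_j)\ge n_j^{2d+1}$. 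Then
\[
\frac{\rho(n_j)^2}{\alpha_{\scriptscriptstyle \textup{Cl}(g)}(n_j)}\le \frac{c^2(1+n_j)^{2d}}{n_j^{2d+1}}\longrightarrow 0 ,
\]
so the $\liminf$ of the ratio is $0$.

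\emph{SRD case.} Here $\rho$ is subexponential, i.e.\ $\rho(n)^{1/n}\to 1$. Exponential growth of the conjugacy class means $\limsup_n \alpha_{\scriptscriptstyle \textup{Cl}(g)}(n)^{1/n}= e^{2\delta}>1$ for some $\delta>0$. Choose $n_j\to\infty$ with $\alpha_{\scriptscriptstyle \textup{Cl}(g)}(n_j)\ge e^{\delta n_j}$. For large $j$ we have $\rho(n_j)\le e^{\delta n_j/4}$, and hence
\[
\frac{\rho(n_j)^2}{\alpha_{\scriptscriptstyle \textup{Cl}(g)}(n_j)}\le e^{-\delta n_j/2}\longrightarrow 0 .
\]
Again the hypothesis of Lemma~\ref{lem:gong} holds.

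The main point of care is the precise meaning of \emph{superpolynomial} and \emph{exponential} growth, that is, $\limsup$ versus $\liminf$. Since Lemma~\ref{lem:gong} requires only $\underline{o}$, a subsequence suffices, so the weakest ($\limsup$) interpretation already works. Every other step is routine.
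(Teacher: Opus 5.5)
Your proposal is correct and takes the same approach as the paper, which states this corollary without separate proof as an immediate consequence of Lemma~\ref{lem:gong}. Your subsequence comparison showing $\rho^2=\underline{o}(\alpha_{\scriptscriptstyle \textup{Cl}(g)})$, which works even under the weaker $\limsup$ readings of ``superpolynomial'' and ``exponential'', is exactly the verification the paper leaves to the reader.
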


\section{A growth criterion for freeness of boundary actions: Proof of Theorem~\ref{thm:freeness}}

In this section we prove Theorem~\ref{thm:freeness}.  
For the proof, we require the following generalization of \cite{Ken15}*{Proposition 3.2}. Given a compact space $\mathcal{X}$, we denote by $B_b(\mathcal{X})$ the commutative C$^*$-algebra of bounded Borel functions on $\mathcal{X}$, equipped with the supremum norm. We denote by $\Sub(\Gamma)$ the compact space of subgroups of $\Gamma$, equipped with the product (Chabauty) topology, and $\Sub_\text{a}(\Gamma)\subset \Sub(\Gamma)$ the closed subspace of amenable subgroups. The state space of $\textup{C}^*_r(\Gamma)$ is denoted by $\mathcal{S}(\textup{C}^*_r(\Gamma))$.

\begin{prop}\label{scarparo}
Let $\mathcal{X}$ be a compact $\Gamma$-space with amenable neighborhood stabilizers.
The map \,
$
\Gamma\ni g\mapsto \textup{\textbf{1}}_{{\rm int}(\textup{Fix}(g))}\in B_b(\mathcal{X})
$
extends to a unital positive $\Gamma$-equivariant map $\psi: \textup{C}^*_r(\Gamma)\to B_b(\mathcal{X})$.
\end{prop}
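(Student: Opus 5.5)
Fix $x\in\mathcal{X}$. The plan is to build, for each $x$, a state $\varphi_x$ on $\textup{C}^*_r(\Gamma)$ with $\varphi_x(\lambda_g)=\textbf{1}_{{\rm int}(\textup{Fix}(g))}(x)$. Then $\psi(a)(x):=\varphi_x(a)$ defines a unital positive map into $\ell^\infty(\mathcal{X})$. We then check that it lands in $B_b(\mathcal{X})$ and is equivariant. On the group algebra $\C\Gamma$, define
\[
\psi_0\Big(\sum_g c_g\lambda_g\Big)=\sum_g c_g\,\textbf{1}_{{\rm int}(\textup{Fix}(g))}.
\]
Each summand is a Borel (indeed open-set indicator) function, so $\psi_0$ maps into $B_b(\mathcal{X})$. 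It is linear and unital, since ${\rm int}(\textup{Fix}(e))=\mathcal{X}$.

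For equivariance, use ${\rm int}(\textup{Fix}(hgh^{-1}))=h\,{\rm int}(\textup{Fix}(g))$. This gives $\textbf{1}_{{\rm int}(\textup{Fix}(hgh^{-1}))}=h\cdot\textbf{1}_{{\rm int}(\textup{Fix}(g))}$ with the usual translation action on functions. Hence $\psi_0$ intertwines conjugation by $\lambda_h$ on $\C\Gamma$ with the $\Gamma$-action on $B_b(\mathcal{X})$.

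The heart of the argument is to show $|\psi_0(a)(x)|\le\|a\|_{\textup{C}^*_r}$ and positivity pointwise, so that $\psi_0$ extends by continuity. Fix $x$ and note $\textbf{1}_{{\rm int}(\textup{Fix}(g))}(x)=\textbf{1}_{\Gamma_x^o}(g)$, where $\Gamma_x^o$ is the neighborhood stabilizer; this is exactly the definition of $\Gamma^o_x$. So the pointwise evaluation $a\mapsto\psi_0(a)(x)$ is the functional $\varphi_x(\lambda_g)=\textbf{1}_{H}(g)$ with $H=\Gamma_x^o$ a subgroup, amenable by hypothesis.

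Now $\textbf{1}_H$ is the positive definite function attached to the quasi-regular representation $\lambda_{\Gamma/H}$ at the vector $\delta_{H}$: indeed $\langle\lambda_{\Gamma/H}(g)\delta_H,\delta_H\rangle=\textbf{1}_H(g)$. Since $H$ is amenable, $\lambda_{\Gamma/H}$ is weakly contained in $\lambda_\Gamma$. Concretely, one takes Følner sets (or almost invariant unit vectors $\xi_i\in\ell^2H$) and induces: the vectors $\eta_i=\xi_i$, viewed in $\ell^2\Gamma$, satisfy $\langle\lambda_g\eta_i,\eta_i\rangle\to\textbf{1}_H(g)$ pointwise. Hence $\varphi_x$ is a pointwise limit of vector states of $\lambda_\Gamma$. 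It therefore extends to a state on $\textup{C}^*_r(\Gamma)$, which gives $|\psi_0(a)(x)|\le\|a\|$ and $\psi_0(a^*a)(x)\ge0$. This weak containment step is the main point; it is standard (Hulanicki/Kesten-type), and it is the only place where amenability of the neighborhood stabilizers enters.

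Consequently $\|\psi_0(a)\|_\infty\le\|\lambda(a)\|$, so $\psi_0$ extends by continuity to $\psi:\textup{C}^*_r(\Gamma)\to B_b(\mathcal{X})$. The space $B_b(\mathcal{X})$ is norm closed, so uniform limits of bounded Borel functions remain bounded Borel. Positivity, unitality and $\Gamma$-equivariance then pass to the limit by density of $\C\Gamma$ and continuity. For positivity, a positive element $b^*b$ is a limit of elements $a^*a$ with $a\in\C\Gamma$, and each $\psi_0(a^*a)$ is a pointwise nonnegative function.
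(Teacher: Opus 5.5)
Your proof is correct and is essentially the paper's argument: in both, $\psi(a)(x)$ is the value at $a$ of the state $\omega_{\Gamma^o_x}$ extending $\mathbf{1}_{\Gamma^o_x}$, which exists because $\Gamma^o_x$ is amenable (you prove this with almost invariant vectors, while the paper cites it). The only difference is how Borel measurability is obtained: the paper composes the Borel map $x\mapsto\Gamma^o_x\in\Sub_{\text{a}}(\Gamma)$ with the weak$^*$-continuous map $H\mapsto\omega_H$, whereas you note that $\psi_0(a)$ is a finite combination of indicators of open sets for $a\in\C\Gamma$ and pass to uniform limits, which is slightly more elementary and avoids verifying those two facts.
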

\begin{proof}
We reproduce the argument of \cite{Ken15}*{Proposition 3.2}. 
First, it is not hard to see that the map $\text{Stab}^o: \mathcal{X}\to \Sub(\Gamma)$ taking a point $x\in\mathcal{X}$ to the neighborhood stabilizer subgroup $\Gamma_x^o\in \Sub(\Gamma)$ is Borel.  
By assumption, $\text{Stab}^o$ takes values in $\Sub_\text{a}(\Gamma)$. Now recall that for each subgroup $H\in \Sub_\text{a}(\Gamma)$, the characteristic function $\chi_H: \Gamma\to \C$ extends to a state $\om_H: \textup{C}^*_r(\Gamma)\to \C$, and the mapping $\Sub_\text{a}(\Gamma)\to \mathcal{S}(\textup{C}^*_r(\Gamma))$ taking an amenable subgroup $H$ to the associated state $\om_H$ is continuous (when the codomain is equipped with the weak$^*$-topology.) In particular, the map $\mathcal{X}\to \mathcal{S}(\textup{C}^*_r(\Gamma))$, taking $\mathcal{X}\ni x\mapsto \om_{\Gamma_x}\in \mathcal{S}(\textup{C}^*_r(\Gamma))$, is Borel. Then, for each $a\in\mathcal{A}$, the map ${\psi}(a):\mathcal{X}\ni x\mapsto \om_{\Gamma_x}(a)\in\mathbb{C}$ is bounded  and Borel. The map $\psi : \textup{C}^*_r(\Gamma)\to B_b(\mathcal{X})$ is easily verified to be unital, positive, and $\Gamma$-equivariant, with $\psi(g) =  \textup{\textbf{1}}_{{\rm int}(\textup{Fix}(g))}$.
\end{proof}

Also recall that, given  $\mu\in\textup{Prob}(\Gamma)$ and a compact $\Gamma$-space $\mathcal{X}$, a measure $\nu\in \text{Prob}(\mathcal{X})$ is said to be $\mu$-stationary if $\nu = \sum_{h\in\Gamma}\mu(h)\,h \nu$, where $h\nu\in \pr(\mathcal{X})$ is defined by $h\nu(E):=\nu(h^{-1}E)$ for every $h\in \G$ and Borel subset $E\subseteq \mathcal{X}$. If $\mu$ is generating (i.e. its support generates $\G$ is a semigroup), then $\nu$ is non-singular (i.e. $g\nu$ and $\nu$ are in the same measure-class for every $g\in\G$), and the support of $\nu$ is a closed $\G$-invariant subset of $\mathcal{X}$. In particular, if $\mu$ is generating and the action $\G\act \mathcal{X}$ is minimal, then every $\mu$-stationary probability measure $\nu\in \pr(\mathcal{X})$ is non-singular and has full support. 

The $\text{C}^*$-dynamical analogue of a stationary measure was investigated in \cite{hart-kalantar-1}. A state $\tau$ on $\textup{C}^*_r(\G)$ is said to be $\mu$-\textit{stationary} if $\tau(a)=\sum_{t\in\G}\mu(t)\,\tau(\lambda_{t}a\lambda_{t^{-1}})$ for all $a\in \textup{C}^*_r(\G)$. We write $\mu * a = \sum_{t\in\Gamma}\mu(t)\lambda_ta\lambda_{t^{-1}}$, so that $\tau(a) = \tau(\mu * a)$ whenever $\tau$ is $\mu$-stationary.

It is a basic fact that $\mu$-stationary measures and states always exist.

\begin{lem}\label{lem:station-topfree}
Assume $g\in\G$ is such that for some generating measure $\mu\in\pr(\G)$, every $\mu$-stationary state $\tau$ on $\textup{C}^*_r(\G)$ vanishes on $g$. Then $g$ acts topologically freely on every compact minimal $\Gamma$-space $\mathcal{X}$ with amenable neighborhood stabilizers.
\begin{proof}
For contrapositive, assume there is a non-empty open set $U\subset \mathcal{X}$ that is pointwise fixed by $g$. Let $\mu\in\pr(\G)$ be generating, and let $\nu\in\pr(\mathcal{X})$ be $\mu$-stationary. Since $\nu$ has full support, $\nu(U)\neq 0$. Let $\psi: \textup{C}^*_r(\Gamma)\to B_b(\mathcal{X})$ be the unital positive $\Gamma$-equivariant map from Proposition~\ref{scarparo}. Then $\tau:=\nu\circ \psi$ is a $\mu$-stationary state on $\textup{C}^*_r(\Gamma)$, and we have $\tau(\lambda_g)= \nu({\rm int}({\rm Fix}(g)))\ge \nu(U)>0$. Hence, the claim follows.   
\end{proof}
\end{lem}

\medskip

\noindent
{\it Proof of Theorem~\ref{thm:freeness}.}~
If $g$ does not act topologically freely on $\mathcal{X}$, then its fixed point space has nonempty interior $V = {\rm int}(\text{Fix}(g))$. By minimality and compactness, there are $h_1, ..., h_m\in\Gamma$ such that $\bigcup_{j=1}^m h_jV = \mathcal{X}$ (note that $h_jV = {\rm int}(\text{Fix}(h_jgh_j^{-1}))$). For each $j$, we set $g_j = h_jgh_j^{-1}$, and $C_j = C_\G(g_j)=h_jCh_j^{-1}$. 

Furthermore, let $h_0\in\G$ be such that $h_0C\neq h_iC$ for any $i=1, \dots, m$. (Note that the growth condition \eqref{eqn} forces $C$ to have infinite index in $\G$.)
Denote $C_0 = h_0Ch_0^{-1}$.

By \eqref{eqn}, we may choose $N\in \bN$ so that for all $1\le i, j \le m$, $|g_i| < N$, and $\rho(3N)^2\cdot \alpha_{\scriptscriptstyle C_0\cap h_jh_i^{-1}C_i}(2N)< \alpha_{\scriptscriptstyle C_0}(N)/m^4$.

We observe that if $a,b\in C_0$ are such that $ag_ia^{-1}=bg_jb^{-1}$ for some $1\le i, j\le m$, then $b^{-1}a\in C_0\cap h_jh_i^{-1}C_i$.

In particular, for a fixed $a\in C_0\cap B_N(\G)$, the cardinality of the set $\{b\in C_0 \cap B_N(\G) \,:\, ag_ia^{-1}=bg_jb^{-1} ~\text{ for some }~ 1\le i, j\le m\,\}$ is bounded by $\sum_{i,j=1}^m |B_{2N}(\G)\cap C_0\cap h_jh_i^{-1}C_i|\le m^2\cdot  \max_{1\le i,j\le m} \alpha_{\scriptscriptstyle C_0\cap h_jh_i^{-1}C_i}(2N)=:M$. 

Choose a maximal set $A\subset B_N(\G)\cap C_0$ such that the maps $A\ni s\mapsto sg_is^{-1}$ are injective for all $i=1, \dots, m$. It follows that $|A|\ge |B_{N}(\G)\cap C_0|/M$.

Define $\mu\in\text{Prob}(\Gamma)$ by $\mu = |A|^{-1}\sum_{k\in A}\delta_k$. Let $\nu\in \text{Prob}(\mathcal{X})$ be a $\mu$-stationary measure. Composing $\nu$ with the unital positive $\Gamma$-equivariant map $\psi: \textup{C}^*_r(\Gamma)\to B_b(\mathcal{X})$ from Proposition \ref{scarparo}, we obtain a $\mu$-stationary state $\tau = \nu\circ\psi$ on $\textup{C}^*_r(\Gamma)$ such that $\tau(g) = \nu({{\rm int}(\textup{Fix}(g))})$, for all $g\in\Gamma$. For each $j = 1, ..., m$, we have
\begin{align*}
\nu(h_jV) &= \nu({{\rm int}(\textup{Fix}(g_j))}) = \tau(\lambda_{g_j}) = \tau(\mu * \lambda_{g_j}) \\&\leq \|\mu * \lambda_{g_j}\|= |A|^{-1}\cdot \big\|\sum_{t\in A}\lambda_{tg_jt^{-1}}\big\|
\\&\leq 
\rho(3N)\cdot |A|^{-1}\cdot \big\|\sum_{t\in A}\lambda_{tg_jt^{-1}}\big\|_2 = \rho(3N)\cdot  |A|^{-1/2} 
\\&\le \rho(3N)\cdot M^{1/2} \cdot |B_{N}(\G)\cap C_0|^{-1/2}
< 1/m\,.
\end{align*}
But then,
\[
1 = \nu(\mathcal{X}) = \nu\left(\bigcup_{j=1}^mh_jV\right) \leq \sum_{j=1}^m \nu(h_jV) < \sum_{j=1}^m \frac{1}{m} = 1,
\]
which is a contradiction.
\qed

\medskip

\textit{Proof of Corollary \ref{urs}.} By  \cite{urs}*{Corollary 1.2}, an amenable URS $\mathcal{H}\subset \Sub(\Gamma)$ can be realized as the family $\mathcal{H} = \{\Gamma_x\}_{x\in\mathcal{X}}$ of (amenable) stabilizers for a compact minimal $\Gamma$-space $\mathcal{X}$, which is everywhere continuous under the stabilizer map $\text{Stab}: \mathcal{X}\to \Sub(\Gamma)$. On such a $\Gamma$-space, topological freeness is equivalent to freeness, so $g\not\in \Gamma_x$ for all $x\in\mathcal{X}$.
\qed

\medskip

We prove Corollary~\ref{cor:srd} in the next section.

\section{Support of stationary states: Proof of Theorem~\ref{thm:Gong}}

We prove Theorem~\ref{thm:Gong} and Corollaries \ref{cor:srd} and \ref{cor:stn} in this section, in particular giving a description of support of stationary states on the reduced $\textup{C}^*$-algebras of groups with \textup{RD} or \textup{SRD}.

\medskip

\noindent
{\it Proof of Theorem~\ref{thm:Gong}.}
Let $\mu\in{\rm Prob}(\G)$ be symmetric with ${\rm supp}(\mu)\subset B_N(\G)$. 
Denote $q: \G\to \G/C_\G(g)$ for the canonical quotient map. 
Let $\ep>0$ be given. By the assumptions, we can choose $n\in\bN$ such that $\rho\big(2nN+|g|\big)\,\kappa_{2n}^{1/4}
\,<\,\ep$.
Let $\tau$ be a $\mu$-stationary state on $\textup{C}^*_r(\G)$. 
Then, we have
\begin{align*}
|\tau(\lambda_g)| &= \big|\tau\big(\sum_{t\in\Gamma}\mu^{*n}(t)\lambda_{tgt^{-1}}\big)\big| 
\\&\le \big\|\sum_{t\in\Gamma}\mu^{*n}(t)\lambda_{tgt^{-1}}\big\|
\\&\leq \rho\big(2nN+|g|\big)\,\big\|\sum_{t\in\Gamma}\mu^{*n}(t)\delta_{tgt^{-1}}\big\|_2
\\&=
\rho\big(2nN+|g|\big)\,\big\|q_*(\mu^{*n})\big\|_2
\\&\le
\rho\big(2nN+|g|\big)\,\big\|q_*(\mu^{*n})\big\|_\infty^{1/2} .
\end{align*}
Since the $\mu$-random walk on the Schreier graph $\G/C_\G(h)$ is symmetric, the probability $q_*(\mu^{*n})(hC_\G(g))$ of stepping into the coset $hC_\G(g)$ after $n$-steps is bounded above by $\sqrt{\kappa_{2n}}$.
Hence, combined with the above we get the estimates
\[\begin{split}
|\tau(\lambda_g)| &\le 
\rho\big(2nN+|g|\big)\,\big\|q_*(\mu^{*n})\big\|_\infty^{1/2}
\\ &\le \rho\big(2nN+|g|\big)\,\kappa_{2n}^{1/4}
\,<\,\ep .
\end{split}\]
It follows that $\tau(\lambda_g)=0$, and this completes the proof. 
\qed

\medskip

\noindent
{\it Proof of Corollary~\ref{cor:stn}.}~
Both cases follow immediately from Theorem~\ref{thm:Gong} since in both cases we have $\rho^k={o}(\kappa^{-1})$ for every $k\in\bN$. \qed

\begin{cor}\label{cor:Oles}
Assume $\G$ has {\rm SRD}. If $C_\G(g)$ is not co-amenable in $\G$ for an element $g\in \G$, then $g$ acts topologically freely on every compact minimal $\Gamma$-space $\mathcal{X}$ with amenable neighborhood stabilizers.\\ If, for every non-trivial $g\in\G$, $C_\G(g)$ is not co-amenable, then $\G$ is $\textup{C}^*$-simple.

\begin{proof}
Since $C_\G(g)$ is not co-amenable in $\G$, the Schreier graph $\G/C_\G(g)$ is non-amenable, and in particular, by the spectral gap, the probability of the return to the origin in the $\mu$-random walk decays exponentially. Thus, by  Corollary~\ref{cor:stn}, every $\mu$-stationary state on $\textup{C}^*_r(\G)$ vanishes at $g$. Hence, Lemma~\ref{lem:station-topfree} implies that $g$ acts topologically freely on every compact minimal $\Gamma$-space $\mathcal{X}$ with amenable neighborhood stabilizers.
\\
If, for every non-trivial $g\in\G$, $C_\G(g)$ is not co-amenable, then applying the above to the case $\mathcal{X}=\partial_F\G$, we get that $\G\act \partial_F\G$ is free, hence $\G$ is $\textup{C}^*$-simple by \cite{KK14}*{Theorem 6.2}.
\end{proof}
\end{cor}

The last assertion in the above corollary strengthens \cite{Olesen2016}*{Theorem 4.4.5}, where it was shown that if $\G$ has \textup{RD} and is not inner amenable, then $\G$ is $\textup{C}^*$-simple. 
Indeed, if $\G$ is not inner amenable, then $C_\G(g)$ is not co-amenable in $\G$ for every non-trivial $g\in\G$. 

\medskip

\noindent
{\it Proof of Corollary~\ref{cor:srd}.}~
If $C$ has subexponential growth, then it is amenable, hence not co-amenable in $\G$, and it follows by Corollary~\ref{cor:Oles} that $g$ acts freely on $\partial_F\G$, and we are done. So, we assume that $\alpha_{\scriptscriptstyle C}$ grows exponentially. Since $\alpha_{\scriptscriptstyle C\,\cap\, \g C\g^{-1}}$ grows subexponentially for all $\g\notin C$, and $\rho$ also grows subexponentially,  \eqref{eqn} obviously holds, hence we may invoke Theorem~\ref{thm:freeness} to conclude the proof. \qed

\medskip

In view of all the above, the following question is natural.

\begin{question}\label{Q}
Are there groups $\G$ with (sub-)rapid decay and trivial amenable radical that are not $\textup{C}^*$-simple? 
\end{question}

We see from Corollary~\ref{cor:Oles} that, for a group $\Gamma$ with SRD, the only possible obstruction to freeness of the action of $g$ on $\partial_F\G$ is amenability of the Schreier graph $\G/C_\G(g)$. 
A related notion to amenability of graphs is the \emph{Liouville property}, which asserts that every bounded harmonic function on the graph is constant. More precisely, in our context, given a subgroup $H\le \G$ and a measure $\mu\in\pr(\G)$, a function $f\in \ell^\infty(\G/H)$ is called $\mu$-harmonic if $f(gH)= \sum_{t\in\G} \mu(t) f(tgH)$ for every $g\in \G$. The subgroup $H$ is said to be \emph{co-Liouville} in $\G$ if there is a symmetric generating measure $\mu\in\pr(\G)$ such that every bounded $\mu$-harmonic function on $\G/H$ is constant.
By results of \cites{Furstenberg73, KaiVer, Rosn}, if $H\trianglelefteq\G$ is normal, then $H$ is co-amenable in $\G$ if and only if it is co-Liouville in $\G$. However, for general subgroups this equivalence fails \cites{Benj, Jus23}.

\begin{prop}
Let $\G$ be a countable group. If $g\in \G$ is such that $C_\G(g)$ is co-Liouville in $\G$, then $g$ acts on $\partial_F\G$ either freely or trivially.
\end{prop}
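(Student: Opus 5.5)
Fix a symmetric generating $\mu\in\pr(\G)$ witnessing that $C=C_\G(g)$ is co-Liouville, i.e. every bounded $\mu$-harmonic function on $\G/C$ is constant. I will show that the function $\G\ni t\mapsto \tau(\lambda_{tgt^{-1}})$ is constant for every $\mu$-stationary state $\tau$, and then get freeness or triviality from the structure of $\partial_F\G$.

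\emph{Step 1: harmonic functions from stationary measures.} Let $\nu\in\pr(\partial_F\G)$ be $\mu$-stationary, and define
\[
f(tC) := \nu\big(\textup{Fix}(tgt^{-1})\big) = \nu\big(t\,\textup{Fix}(g)\big) .
\]
This is well defined because $\textup{Fix}(tgt^{-1})$ depends only on $tC$. It is bounded. Using $\mu$-stationarity and the symmetry of $\mu$,
\[
f(tC) = \sum_{s}\mu(s)\,\nu\big(s^{-1}t\,\textup{Fix}(g)\big) = \sum_s \mu(s) f(stC),
\]
so $f$ is $\mu$-harmonic on $\G/C$. A left-versus-right convention issue needs care here; symmetry of $\mu$ is what makes it match the convention $f(gH)=\sum_t\mu(t)f(tgH)$. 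By the co-Liouville hypothesis, $f\equiv c$ is constant. Hence
\[
\nu\big(\textup{Fix}(g)\setminus t\,\textup{Fix}(g)\big) = c - \nu\big(\textup{Fix}(g)\cap t\textup{Fix}(g)\big)
\]
for all $t$.

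\emph{Step 2: dichotomy.} Suppose $g$ acts non-freely, so $\textup{Fix}(g)\neq\emptyset$. Since $\textup{Fix}(g)$ is closed, this does not immediately give $c>0$. Here I would use that $\partial_F\G$ is extremally disconnected and that the action of $\G$ on it is strongly proximal; in this setting $\textup{Fix}(g)$ is clopen (Frolík's theorem on fixed sets of homeomorphisms of extremally disconnected spaces). A nonempty clopen set has positive measure because $\nu$ has full support, so $c>0$. Now use the strong boundary property: for any clopen nonempty $U$ there are $t_1,\dots,t_k$ with $\bigcup t_iU=\partial_F\G$, and contractivity lets us push $\nu$ to nearly a point mass. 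I would argue that $c=1$ as follows. If $c<1$, the complement $W=\partial_F\G\setminus \textup{Fix}(g)$ is a nonempty clopen set. Take $t$ with $t\,\textup{Fix}(g)\subseteq W$, which is possible for a Furstenberg boundary since proximality allows compressing the clopen complement of a proper clopen set. Varying $\nu$ among stationary measures, and using that stationary measures for $\mu^{*n}$ and translates $s\nu$ built as limits can concentrate almost all their mass on a small set, this produces a stationary measure $\nu'$ with $\nu'(\textup{Fix}(g))$ different from $\nu'(t\textup{Fix}(g))$. That contradicts constancy of $f$ for $\nu'$. So $\textup{Fix}(g)$ has full measure, hence equals $\partial_F\G$ since it is closed and $\nu$ has full support. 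Therefore $g$ acts trivially.

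\emph{Main obstacle.} The delicate point is the last step: extracting a contradiction from the constancy of $t\mapsto \nu(t\textup{Fix}(g))$ when $0<c<1$, since stationary measures are not arbitrary. I would first try a simpler route. Harmonicity gives $\nu(\textup{Fix}(g)\,\triangle\, s\textup{Fix}(g))=0$ for $s\in{\rm supp}(\mu)$: constancy of $f$, together with equality in the harmonic average, should force the sets to coincide $\nu$-a.e. when the functions $\one_{t\textup{Fix}(g)}$ are considered via the harmonic function $x\mapsto$ Poisson-transform. Because $\mu$ is generating, $\textup{Fix}(g)$ would then be $\G$-invariant up to null sets. By clopenness and full support it would be genuinely $\G$-invariant, so by minimality it is $\emptyset$ or all of $\partial_F\G$.
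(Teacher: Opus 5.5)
\textbf{There is a gap in Step 2.} Your Step 1 matches the paper's first step. The function $f(tC)=\nu(t\,\textup{Fix}(g))$ is a bounded $\mu$-harmonic function on $\G/C$, and your symmetry computation for the convention is correct. So $f$ is constant, and $\textup{Fix}(g)$ is clopen.

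The dichotomy in Step 2 is not actually proved. Compressing $\textup{Fix}(g)$ into its complement $W$ is not justified for a general boundary. Even if it were, it only yields $c\le 1-c$, which is no contradiction. You then appeal to ``varying $\nu$ among stationary measures'' to produce a stationary $\nu'$ with $\nu'(\textup{Fix}(g))\neq\nu'(t\,\textup{Fix}(g))$, but that construction is never carried out.

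Your fallback route is also unsound. Constancy of $t\mapsto\nu(t\,\textup{Fix}(g))$ only says that all translates $s\,\textup{Fix}(g)$ have the same $\nu$-measure. That does not imply $\nu\big(\textup{Fix}(g)\,\triangle\, s\,\textup{Fix}(g)\big)=0$, and nothing in the harmonic averaging forces it.

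\textbf{The missing idea.} You do not need any other stationary measure, because constancy of $f$ for the single measure $\nu$ already controls all of its translates. Writing $V=\textup{Fix}(g)$, for every $s\in\G$ we have $(s\nu)(V)=\nu(s^{-1}V)=f(s^{-1}C)=c$.

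Now use the boundary property directly; you mention contractivity but do not apply it this way. Given any $x\in\partial_F\G$, choose a net $(s_i)$ with $s_i\nu\to\delta_x$ weak$^*$. Since $V$ is clopen, $\one_V$ is continuous, so $c=(s_i\nu)(V)\to\one_V(x)$.

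Hence $\one_V(x)=c$ for every $x$, so $V$ is either empty or all of $\partial_F\G$. If $V\neq\emptyset$, full support of $\nu$ gives $c>0$, which forces $V=\partial_F\G$. This is exactly how the paper concludes.
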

\begin{proof}
Assume $g\act \partial_F\G$ is not free, and let $V=\{x\in \partial_F\G : gx=x\}$ be the fixed point set of $g$, which is non-empty and clopen. Let $\mu\in\pr(\G)$ be a symmetric measure such that every bounded $\mu$-harmonic function on $\G/C$ is constant. Let $\nu\in\pr(\partial_F\G)$ be $\mu$-stationary, and define $f\in\ell^\infty(\G)$ by $f(g):=\nu(gV)$.
Denote $C=C_\G(g)$. Since $hV=V$ for every $h\in C$, it follows that $f$ is invariant under right translation by $C$, i.e. $f\in \ell^\infty(\G/C)$. Moreover, we have
$\sum_{t\in\G} \mu(t) f(tgC) = \sum_{t\in\G} \mu(t) \nu(tgV)
\overset{\ast}{=} \nu(gV) = f(gC)$,
where $\overset{\ast}{=}$ follows by $\mu$-stationarity of $\nu$. This shows $f$ is a bounded $\mu$-harmonic function on $\G/C$, hence constant by the assumption. Since $V$ is non-empty and $\nu$ has full support, $f$ is non-zero. 
Now, let $x\in \partial_F\G$, and find a net $(g_i)$ of elements $g_i\in \G$ such that $g_i\nu\to \delta_{x}$ weak*. We have $0\neq \nu(V) = f(e)= f(g_i^{-1}) = g_i\nu(V) \to \one_V(x)$, which implies $x\in V$. Hence, $V=\partial_F\G$, that is, $g$ acts trivially on $\partial_F\G$.
\end{proof}

As noted above, for a general subgroup $H\le \G$, being co-amenable and co-Liouville in $\G$ are not equivalent properties. However, it was proven in \cite{Hart-Tamuz} that any co-Liouville IRS is co-amenable. A converse result in our setting could offer a plausible path to a negative answer to Question~\ref{Q}. A relevant observation here is that for any subgroup $H\le\G$, the map $gH\to gHg^{-1}$ extends to a continuous $\G$-equivariant map $\beta( \G/H)\to {\rm Sub}(\G)$, and if $H$ is co-amenable in $\G$, the pushforward of the invariant mean defines an IRS on $\G$ supported on the closure of conjugates of $H$ in ${\rm Sub}(\G)$.

\section{Amenable radical and growth of conjugacy classes}

As mentioned in previous sections, \cite{gong2015}*{Lemma 2.8} (and more generally, Lemma~\ref{lem:gong}),
describes the support of traces on $\textup{C}^*_r(\G)$ in terms of the growth of conjugacy classes,
for $\G$ with (sub-)rapid decay. On the other hand, \cite{BKKO2017}*{Theorem 4.1} describes the support of traces on $\textup{C}^*_r(\G)$, for a general discrete group $\G$, in terms of the amenable radical. 
These facts raise the question of the relation between elements with ``slow growing'' conjugacy classes, and the amenable radical.

In this section, we investigate this, and in particular, we show that the above approaches are equivalent in the case of groups with (sub-)rapid decay. 

\begin{defn}
Let $\Gamma$ be a finitely generated group endowed with the word length. Define
\[
\textup{PC}(\Gamma) = \{g\in\Gamma: \alpha_{\scriptscriptstyle {\rm Cl}(g)} ~\text{ grows polynomially}\},
\]
and
\[
\textup{SEC}(\Gamma) = \{g\in\Gamma: \alpha_{\scriptscriptstyle {\rm Cl}(g)} ~\text{ grows subexponentially}\}.
\]
We say $\G$ is a {PC-group} (respectively, an {SEC-group}),
if $\Gamma = \textup{PC}(\Gamma)$ (respectively, $\Gamma = \textup{SEC}(\Gamma)$). 
\end{defn}

\begin{prop} 
The subgroups $\textup{PC}(\Gamma)\subseteq\textup{SEC}(\Gamma)$ are independent of the choice of the generating set defining the word length. Both subgroups are characteristic (hence, normal) in $\G$.
\end{prop}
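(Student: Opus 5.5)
Three things need checking: that the classes are independent of the generating set, that they are subgroups, and that they are characteristic.

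\emph{Independence of the generating set.} For two finite symmetric generating sets $S,S'$ there is a constant $K\ge 1$ with $B^{S}_n(\G)\subseteq B^{S'}_{Kn}(\G)$ and vice versa. Hence for any subset $A\subseteq \G$,
\[
\alpha^{S}_{A}(n)\le \alpha^{S'}_{A}(Kn)\le \alpha^{S}_{A}(K^2n).
\]
Polynomial growth is preserved under $n\mapsto Kn$, and so is subexponential growth (that is, $\limsup_n \tfrac1n\log\alpha(n)=0$). Applying this with $A=\textup{Cl}(g)$ shows that both $\textup{PC}(\G)$ and $\textup{SEC}(\G)$ do not depend on the generating set. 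The inclusion $\textup{PC}(\G)\subseteq\textup{SEC}(\G)$ is immediate.

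\emph{Characteristic.} Let $\theta\in\textup{Aut}(\G)$. Then $\theta(\textup{Cl}(g))=\textup{Cl}(\theta(g))$. Moreover $\theta$ maps the $S$-balls bijectively onto the $\theta(S)$-balls, so
\[
\alpha^{\theta(S)}_{\textup{Cl}(\theta g)}(n)=\alpha^{S}_{\textup{Cl}(g)}(n).
\]
By the independence just proved, $\theta$ preserves both sets. It therefore remains only to show that they are subgroups.

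\emph{Subgroup property.} The identity lies in both sets, since its class is a single point. The sets are closed under inverses because $\textup{Cl}(g^{-1})=\textup{Cl}(g)^{-1}$ and $|x^{-1}|=|x|$.

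The main step is closure under products. For $g,h$, the orbit-stabilizer identification $\textup{Cl}(g)\cong\G/C_\G(g)$ fails to be length-compatible, so I would avoid it. Instead I would use the following inclusion: every conjugate $t(gh)t^{-1}=(tgt^{-1})(tht^{-1})$ lies in $\textup{Cl}(g)\textup{Cl}(h)$. The difficulty is that a short element of $\textup{Cl}(gh)$ need not factor into short conjugates, because $|tgt^{-1}|$ is not controlled by $|t(gh)t^{-1}|$.

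To handle this, I would parametrize via the conjugator. Choose, for each $x\in\textup{Cl}(gh)\cap B_n$, a conjugator $t$ of minimal length. The aim is to bound $|t|$ linearly in $n$, which would imply that $tgt^{-1}$ and $tht^{-1}$ have length at most $2|t|+|g|$ (resp. $2|t|+|h|$). However, a minimal conjugator can be long, so this is also problematic. A cleaner route is to bound
\[
\alpha_{\textup{Cl}(gh)}(n)\le \big|\{(a,b)\in \textup{Cl}(g)\times\textup{Cl}(h): |ab|\le n\}\big|
\]
and then try to show that one may take $|a|\le Ln$.

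I expect this length control to be the genuine obstacle. My fallback is to test the claim against examples, such as the lamplighter group or wreath products, where a minimal conjugator could be exponentially long. If the length control fails, I would instead try to reduce to the statement that the subexponential conjugacy-class growth is detected on a finite-index or normal piece. Failing that, I would check whether the paper intends the sets as subgroups at all, or only as normal subsets (conjugation invariance being clear because $\textup{Cl}$ is a class function). In the latter case the statement reduces to the first two steps above.
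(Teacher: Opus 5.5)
Your first two steps are exactly the paper's proof. The paper only observes that word metrics for different finite generating sets are bi-Lipschitz equivalent, so $n\mapsto f(Cn)$ preserves polynomial or subexponential growth. It also notes that an automorphism carries generating sets to generating sets, hence balls to balls and classes to classes. The paper gives no argument that $\textup{PC}(\Gamma)$ and $\textup{SEC}(\Gamma)$ are closed under products; it simply calls them subgroups. So the paper does not supply the step you could not complete either. Your suspicion about length control is correct: closure under products is false in general, so this step cannot be proved by any argument.

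\emph{Counterexample.} Let $M=\frac12\begin{pmatrix}3&1\\1&3\end{pmatrix}\in\mathrm{GL}_2(\mathbb{Z}[1/2])$, which fixes $(1,-1)$ and doubles $(1,1)$. On $U=\mathbb{Z}[1/2]^2\times\mathbb{Z}\times\mathbb{Z}[1/2]^2$ use the product $(a,b,z)(a',b',z')=(a+a',b+b',z+z'+b'a)$. Then $s(a,b,z)=(Ma,b,Mz)$ is an automorphism. Let $\Gamma=\langle x_1,x_2,y,s\rangle\le U\rtimes\langle s\rangle$ with $x_i=(e_i,0,0)$ and $y=(0,1,0)$.

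A direct computation gives $(a',b',z')(a,0,0)(a',b',z')^{-1}=(a,0,-b'a)$. Hence $\textup{Cl}(x_1)\subseteq\{(M^ke_1,0,-bM^ke_1):k,b\in\mathbb{Z}\}$.

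Since $M$ preserves $v_1-v_2$ and doubles $v_1+v_2$, there are two homomorphisms. The first is $\Gamma\to H_3(\mathbb{Z})\times\mathbb{Z}$, $(a,b,z)s^j\mapsto\big((a_1-a_2,b,z_1-z_2),j\big)$, where $H_3$ has law $(m,b,c)(m',b',c')=(m+m',b+b',c+c'+b'm)$. The second is $\Gamma\to\mathbb{Z}[1/2]\rtimes_2\mathbb{Z}$, $(a,b,z)s^j\mapsto(a_1+a_2,j)$. They send $(M^ke_1,0,-bM^ke_1)$ to $((1,0,-b),0)$ and to $(2^k,0)$ respectively. So on $B_n(\Gamma)$ this element forces $|b|\le n^2$ and $-n\le k\le n+\log_2 n$. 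Hence $\alpha_{\textup{Cl}(x_1)}(n)=O(n^3)$, and likewise for $x_2$.

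On the other hand, $\zeta=(0,0,e_1+e_2)=yx_2^{-1}x_1^{-1}y^{-1}x_1x_2\in\Gamma$ satisfies $s\zeta s^{-1}=\zeta^2$, so $|\zeta^b|=O(\log|b|)$. Moreover $y^b(x_1x_2)y^{-b}=x_1x_2\zeta^{-b}$. Thus $\alpha_{\textup{Cl}(x_1x_2)}(n)\ge e^{cn}$: $x_1,x_2\in\textup{PC}(\Gamma)$ but $x_1x_2\notin\textup{SEC}(\Gamma)$.

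This is exactly the obstruction you anticipated. The element $x_1x_2\zeta^{-b}$ has length $O(\log|b|)$. Yet every factorization $(tx_1t^{-1})(tx_2t^{-1})$ of it has $t\in y^bC_\Gamma(x_1x_2)$, so the factors have Heisenberg image $(\pm1,0,\mp b)$ and length $\gtrsim\sqrt{|b|}$.

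So the provable content of the statement is what you and the paper actually establish. Both sets are independent of the generating set and $\textup{Aut}(\Gamma)$-invariant, in particular conjugation-invariant. They also contain $e$ and are closed under inverses. Your fallback of reading them as normal subsets is the right repair of the statement. Be aware, though, that the paper later relies on more: the proof of Proposition~\ref{pcar} takes a finite $T\subseteq\textup{SEC}(\Gamma)$ and uses $\langle T\rangle\subseteq\textup{SEC}(\Gamma)$, which is precisely the closure property that fails.
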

\begin{proof}
Recall that if $S$ and $T$ are finite generating sets for $\Gamma$, then there is a constant $C > 0$ such that $C^{-1}|g|_S\leq |g|_T\leq C|g|_S$, for all $g\in\Gamma$. Now the first claim amounts to the observation that a function $n\mapsto f(n)$ has polynomial (resp. subexponential) growth if and only if $n\mapsto f(Cn)$ has polynomial (resp. subexponential) growth. The second claim follows from the observation that if $\varphi\in \text{Aut}(\G)$, then $S\subset\G$ generates $\G$ if and only if $\varphi(S)$ does.
\end{proof}

\begin{prop}\label{pcamenable}
Let $\G$ be a finitely generated group. 
\begin{itemize}
\item[(i)]
Let $h\in\G$. If 
$\alpha_{\scriptscriptstyle{\rm Cl}(h)}$ grows subexponentially \textup{(}respectively, polynomially\textup{)}, then so does $\beta_{\scriptscriptstyle C_\G(h)}$.
\item[(ii)]
Denote by $Z(\Gamma)$ the center of $\G$.
If $\G$ is a SEC-group \textup{(}respectively, PC-group\textup{)}, then the quotient group $\G/Z(\G)$ has subexponential \textup{(}respectively, polynomial\textup{)} growth.
\end{itemize}
\end{prop}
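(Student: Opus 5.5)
For part (i), I will compare the ball in the Schreier graph $\G/C_\G(h)$ with the conjugacy class of $h$ through the orbit map. The map $\G/C_\G(h)\to {\rm Cl}(h)$, $\g C_\G(h)\mapsto \g h\g^{-1}$, is a well-defined bijection. The ball $B_n(\G/C_\G(h))$ consists exactly of the cosets $\g C_\G(h)$ with $|\g|\le n$. For such $\g$ we have $|\g h\g^{-1}|\le 2n+|h|$. Hence the bijection sends $B_n(\G/C_\G(h))$ injectively into ${\rm Cl}(h)\cap B_{2n+|h|}(\G)$, which gives
\[
\beta_{\scriptscriptstyle C_\G(h)}(n)\le \alpha_{\scriptscriptstyle {\rm Cl}(h)}(2n+|h|).
\]
Since $|h|$ is a fixed constant, a subexponential (respectively, polynomial) bound on $\alpha_{\scriptscriptstyle{\rm Cl}(h)}$ transfers directly to $\beta_{\scriptscriptstyle C_\G(h)}$. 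This part is routine.

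For part (ii), let $S=\{s_1,\dots,s_k\}$ be the fixed finite symmetric generating set. Then $Z(\G)=\bigcap_{i=1}^k C_\G(s_i)$. The quotient $\G/Z(\G)$ is generated by the image of $S$, and its Cayley ball $B_n(\G/Z(\G))$ is the image of $B_n(\G)$. I will use the map
\[
\G/Z(\G)\to \prod_{i=1}^k \G/C_\G(s_i),\qquad \g Z(\G)\mapsto (\g C_\G(s_1),\dots,\g C_\G(s_k)).
\]
It is well defined and injective, because $\g C_\G(s_i)=\g' C_\G(s_i)$ for all $i$ means $\g^{-1}\g'\in\bigcap_i C_\G(s_i)=Z(\G)$. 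It maps $B_n(\G/Z(\G))$ into $\prod_i B_n(\G/C_\G(s_i))$, since a word of length at most $n$ in $S$ representing $\g$ also moves the trivial coset to $\g C_\G(s_i)$ in at most $n$ steps. Therefore
\[
|B_n(\G/Z(\G))|\le \prod_{i=1}^k \beta_{\scriptscriptstyle C_\G(s_i)}(n).
\]

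If $\G$ is an SEC-group (respectively, PC-group), each generator $s_i$ lies in ${\rm SEC}(\G)$ (respectively, ${\rm PC}(\G)$). By part (i), each $\beta_{\scriptscriptstyle C_\G(s_i)}$ grows subexponentially (respectively, polynomially). A finite product of subexponential (respectively, polynomial) functions is again subexponential (respectively, polynomial), so $\G/Z(\G)$ has the claimed growth.

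The only point needing care is the injectivity of the product-of-cosets map, which relies on the center being the intersection of the centralizers of the generators. Note that this argument only uses that the generators, rather than all elements, have slowly growing conjugacy classes.
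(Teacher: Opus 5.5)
Your proof is correct and follows essentially the same approach as the paper. In part (i) the paper uses the same injection $\gamma C_\Gamma(h)\mapsto \gamma h\gamma^{-1}$ into $\mathrm{Cl}(h)\cap B_{2n+|h|}(\Gamma)$. In part (ii) it maps $\gamma Z(\Gamma)\mapsto(\gamma^{-1}s\gamma)_{s\in S}$ directly into $\prod_{s\in S}\mathrm{Cl}(s)\cap B_{2n+1}(\Gamma)$, which is your product-of-cosets injection composed with the maps from part (i), up to the direction of conjugation.
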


\begin{proof}
(i)\ 
Denote $H=C_\G(h)$, and for each $n\in\mathbb{N}$, set $\bar{B}_n 
= \{gH\in\G/H: |gH| \le n\}$, and $B_n 
= \{g\in\Gamma: |g| \le n\}$. 

The map $\Phi_n: \bar{B}_n\to \text{Cl}(h)\cap B_{2n+|h|}$ given by
$
\Phi_n(gH) = ghg^{-1}
$
is a well-defined injection for each $n\in\mathbb{N}$, and hence 
\[
|\bar{B}_n|\leq |\text{Cl}(h)\cap B_{2n+|h|}| ,
\]
which implies the assertions in the first item. 
\\[1ex]
(ii)\
Let $S\subset \G$ be a finite generating set.
We use the same notation as in the above case for $H=Z(\G)$, and define 
the maps $\Psi_n: \bar{B}_n\to \prod_{s\in S}\text{Cl}(s)\cap B_{2n+1}$ by
\[
\Psi_n(gH) = (g^{-1}sg)_{s\in S} .
\]
Then, $\Psi_n$
is a well-defined injection for every $n\in\bN$, and hence 
\[
|\bar{B}_n|\leq \prod_{s\in S}|\text{Cl}(s)\cap B_{2n+1}| ,
\]
which implies that if $\G$ is a SEC-group (respectively, PC-group), then $\G/Z(\G)$ is of subexponential (respectively, polynomial) growth.
\end{proof}

Note that part (ii) of Proposition~\ref{pcamenable} implies that every SEC-group (and hence every PC-group) is amenable. 
More generally, we have the following.

\begin{prop}\label{pcar}
If $\G$ is a finitely generated group, then 
$\textup{SEC}(\G)$ is amenable. In particular, $\textup{SEC}(\Gamma)\leq \textup{Rad}(\Gamma).$
\end{prop}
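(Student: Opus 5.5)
Since $\textup{SEC}(\G)$ is a normal subgroup by the preceding proposition, it is enough to prove that $\textup{SEC}(\G)$ is amenable. The inclusion $\textup{SEC}(\G)\le \textup{Rad}(\G)$ then follows from maximality of the amenable radical. Amenability is a local property: a countable group is amenable if and only if each of its finitely generated subgroups is. So I fix finitely many elements $h_1,\dots,h_k\in\textup{SEC}(\G)$, set $\Lambda=\langle h_1,\dots,h_k\rangle$, and aim to show that $\Lambda$ is amenable.

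The key step mimics Proposition~\ref{pcamenable}(ii), applied to the \emph{ambient} group rather than to $\Lambda$. Put $H=\bigcap_{i=1}^k C_\G(h_i)$. Consider the map
\[
\Psi_n:\{gH\in\G/H: |gH|\le n\}\to \prod_{i=1}^k {\rm Cl}(h_i)\cap B_{2n+|h_i|}(\G),\qquad \Psi_n(gH)=(gh_ig^{-1})_{i}.
\]
It is well defined and injective, because $gh_ig^{-1}=g'h_ig'^{-1}$ for all $i$ holds exactly when $g^{-1}g'\in H$. Each factor grows subexponentially, so $\beta_H(n)$ grows subexponentially. This means the Schreier graph $\G/H$ has subexponential growth, so it is amenable, i.e.\ $H$ is co-amenable in $\G$.

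Next I transfer this to $\Lambda$. Co-amenability of $H$ in $\G$ gives a $\G$-invariant mean on $\ell^\infty(\G/H)$. Restricting it gives a $\Lambda$-invariant mean on $\ell^\infty(\G/H)$, so the action $\Lambda\act\G/H$ is amenable. The stabilizer of the coset $gH$ in $\Lambda$ is $\Lambda\cap gHg^{-1}$. Each such stabilizer centralizes $gh_ig^{-1}$ for all $i$, but not obviously $\Lambda$ itself, so this step needs more care. I plan to use $Z(\Lambda)=\Lambda\cap H$ instead, and to apply the growth argument inside $\Lambda$ directly. For $g\in\Lambda$ with $|g|_\Lambda\le n$ we have $|g|_\G\le cn$, where $c$ is the maximal $\G$-length of the $h_i$. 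Hence $gh_ig^{-1}\in {\rm Cl}_\G(h_i)\cap B_{2cn+|h_i|}(\G)$. The map $g\,Z(\Lambda)\mapsto (gh_ig^{-1})_i$ is injective on $\Lambda/Z(\Lambda)$, since $Z(\Lambda)=\Lambda\cap\bigcap_i C_\G(h_i)$. Therefore the ball of radius $n$ in $\Lambda/Z(\Lambda)$, measured in the word metric of $\Lambda$ pushed to the quotient, has cardinality at most $\prod_i \alpha_{{\rm Cl}(h_i)}(2cn+|h_i|)$, which is subexponential in $n$.

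So $\Lambda/Z(\Lambda)$ is a finitely generated group of subexponential growth, hence amenable. Since $Z(\Lambda)$ is abelian, $\Lambda$ is an extension of an amenable group by an amenable group and is therefore amenable. Hence every finitely generated subgroup of $\textup{SEC}(\G)$ is amenable, so $\textup{SEC}(\G)$ is amenable, and it lies in $\textup{Rad}(\G)$ because it is a normal amenable subgroup.

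The main obstacle I expect is the metric comparison in the third step. The growth hypothesis refers to the word length of $\G$, while amenability of $\Lambda/Z(\Lambda)$ needs growth with respect to $\Lambda$'s own generating set. The inequality $|g|_\G\le c|g|_\Lambda$ goes in the right direction, so the estimate should hold, but I will check that the injectivity argument uses only the $\Lambda$-ball. This is the point that distinguishes the argument from Proposition~\ref{pcamenable}(ii), which treats only the case $\Lambda=\G$.
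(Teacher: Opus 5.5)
Your argument is correct and is essentially the paper's proof. The paper shows that a finitely generated $\Lambda=\langle T\rangle\le\textup{SEC}(\G)$ is itself an SEC-group via $B^T_n\cap{\rm Cl}_\Lambda(h)\subseteq B^S_{Mn}\cap{\rm Cl}_\G(h)$ and then invokes Proposition~\ref{pcamenable}(ii), while you inline that proposition's injection $gZ(\Lambda)\mapsto(gh_ig^{-1})_i$ into products of $\G$-conjugacy classes, using the same length comparison $|g|_\G\le c|g|_\Lambda$. (Your opening co-amenability detour is not needed, and your version has the mild bonus of using only that the generators $h_i$ lie in $\textup{SEC}(\G)$.)
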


\begin{proof}
We show that every finitely generated subgroup of $\textup{SEC}(\G)$ is an SEC-group, hence amenable by Proposition~\ref{pcamenable}. It then follows that $\textup{SEC}(\G)$ is amenable. 

So, let $T\subseteq \textup{SEC}(\Gamma)$ be finite, and set $H = \langle T\rangle$. 
For every $h\in H$ we have
\[
B_n^T\cap \text{Cl}_H(h)\subseteq B_{Mn}^S\cap \text{Cl}_{\Gamma}(h),
\]
where $\text{Cl}_H(h)$ and $\text{Cl}_H(h)$ are the conjugacy classes of $h$ in $H$ and $\G$, respectively, and $M = \max\limits_{t\in T}|t|_S$. Thus,  
$H$ is an SEC-group with respect to $T$, as desired.
\end{proof}

\begin{cor}\label{areqpc}
If $\Gamma$ has \textup{SRD}, then $\textup{SEC}(\Gamma) = \textup{Rad}(\Gamma)$.
If $\Gamma$ has \textup{RD}, then $\textup{PC}(\Gamma) = \textup{SEC}(\Gamma) = \textup{Rad}(\Gamma)$.
\end{cor}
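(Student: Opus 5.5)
We prove both equalities by combining the inclusion $\textup{SEC}(\Gamma)\le \textup{Rad}(\Gamma)$ of Proposition~\ref{pcar} (valid for every finitely generated $\G$) with a reverse inclusion coming from the (sub-)rapid decay hypothesis and the description of traces in \cite{BKKO2017}. We always have $\textup{PC}(\Gamma)\subseteq\textup{SEC}(\Gamma)\le\textup{Rad}(\Gamma)$, so in the SRD case it suffices to show $\textup{Rad}(\Gamma)\subseteq\textup{SEC}(\Gamma)$, and in the RD case it suffices to show $\textup{Rad}(\Gamma)\subseteq\textup{PC}(\Gamma)$.

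For the reverse inclusion we argue by contrapositive via traces. By \cite{BKKO2017} (as recalled in the introduction), the canonical map gives a trace $\tau_R$ on $\textup{C}^*_r(\G)$, namely the character $\chi_{R(\G)}$ extended to $\textup{C}^*_r(\G)$. This extension exists because $R(\G)$ is amenable, as used in the proof of Proposition~\ref{scarparo}, and it is a trace because $R(\G)$ is normal, so $\chi_{R(\G)}$ is conjugation invariant. Hence $\tau_R(\lambda_g)=1$ for every $g\in R(\G)$.

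Now take $g\in\textup{Rad}(\Gamma)$ and suppose $g\notin\textup{SEC}(\Gamma)$, so that $\alpha_{\textup{Cl}(g)}$ does not grow subexponentially. Under SRD, $\rho$ grows subexponentially, so $\rho^2$ does as well. We claim this yields $\rho^2=\underline{o}(\alpha_{\textup{Cl}(g)})$. Then Lemma~\ref{lem:gong} gives $\tau_R(\lambda_g)=0$, contradicting $\tau_R(\lambda_g)=1$. The RD case runs the same way: if $g\notin\textup{PC}(\Gamma)$, then $\alpha_{\textup{Cl}(g)}$ is not polynomially bounded, while $\rho^2$ is polynomially bounded, and Lemma~\ref{lem:gong} again applies. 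This gives $\textup{Rad}\subseteq\textup{PC}\subseteq\textup{SEC}\subseteq\textup{Rad}$.

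The main subtlety is making the growth notions precise enough that "not subexponential" and "subexponential" really produce a vanishing $\liminf$. We take "grows subexponentially" to mean $\limsup_n \alpha(n)^{1/n}=1$; note that $\alpha$ is nondecreasing and submultiplicative up to constants is not needed. If this fails, there are $c>1$ and infinitely many $n$ with $\alpha_{\textup{Cl}(g)}(n)\ge c^n$. On the other hand, $\rho(n)^2\le (c')^{n}$ for any fixed $c'\in(1,c)$ and all large $n$. Along that subsequence the ratio $\rho^2/\alpha$ is at most $(c'/c)^n\to0$, so the $\liminf$ is $0$.

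For the polynomial case we take "polynomial growth" to mean $\alpha(n)\le Cn^d$ for some $C,d$. If this fails, then for $\rho(n)^2\le Dn^k$ we have, for every $d$, infinitely many $n$ with $\alpha(n)\ge n^{d}$. Choosing $d=k+1$ gives a subsequence along which $\rho^2/\alpha\le D/n\to0$. With these conventions both cases are immediate, so the only real care needed is this liminf bookkeeping.
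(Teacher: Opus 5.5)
Your proposal is correct and follows the paper's proof: Proposition~\ref{pcar} gives $\textup{PC}(\Gamma)\subseteq\textup{SEC}(\Gamma)\le\textup{Rad}(\Gamma)$. The reverse inclusion comes from applying Lemma~\ref{lem:gong} to the trace extending $\chi_{\textup{Rad}(\Gamma)}$, which equals $1$ on the amenable radical. The only addition is that you spell out the $\liminf$ bookkeeping, which the paper leaves implicit: a subexponential (resp. polynomial) bound on $\rho$, combined with failure of subexponential (resp. polynomial) growth of $\alpha_{\textup{Cl}(g)}$, forces $\rho^2=\underline{o}(\alpha_{\textup{Cl}(g)})$.
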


\begin{proof}
By Proposition~\ref{pcar} we have $\textup{PC}(\G)\leq \textup{SEC}(\G)\leq \textup{Rad}(\G)$. 

Conversely, since $\textup{Rad}(\G)$ is amenable, its characteristic function extends to a continuous trace on $\textup{C}^*_r(\G)$. Thus, Lemma~\ref{lem:gong} implies that if $\Gamma$ has \textup{SRD}, then $\textup{SEC}(\Gamma) = \textup{Rad}(\Gamma)$, and if $\Gamma$ has \textup{RD}, then $\textup{PC}(\Gamma) = \textup{Rad}(\Gamma)$.
\end{proof}

\begin{ex}
(i)\ The group $\G={\rm SL}_3(\bZ)$ does not have \textup{SRD}, but since its amenable radical is trivial, we have $\textup{PC}(\G) = \textup{SEC}(\G) = \textup{Rad}(\G)$.
\\
(ii)\
If $\G$ is an amenable group of exponential growth with trivial center, then $\G$ is not an SEC-group, thus
$\textup{SEC}(\G) \neq \textup{Rad}(\G)=\G$.
\\
(iii)\
If $\G$ is of intermediate growth with trivial center (e.g. the Grigorchuk group), then $\G$ is an SEC-group but not PC-group, and therefore we have 
$\textup{PC}(\G) \neq \textup{SEC}(\G)=\G$.
\end{ex}

\section{Le Boudec's groups}

In \cite{boudec}, Le Boudec constructs a family of countable automorphism groups on regular trees which have no nontrivial normal amenable subgroups, but are not C$^*$-simple. We briefly remind the reader of this construction. Fix a countable set $\Omega$ with at least three elements. Let $T_{\Omega}$ denote a regular tree with degree equal to the size of $\Omega$, and fix a coloring $c: \text{edge}(T_{\Omega})\to \Omega$ which is a bijection when restricted to the edges adjacent to any vertex $v\in \text{vert}(T_{\Omega}).$ Given $g\in \text{Aut}(T_{\Omega})$ and $v\in \text{vert}(T_{\Omega})$, let $\sigma_c(g, v)\in \text{Bij}(\Omega)$ denote the underlying permutation on $\Omega$ induced (by $c$) from the bijection between the adjacent edges of $v$ and the adjacent edges of $g(v)$.

Now fix subgroups $F< F'\leq \text{Bij}(\Omega)$ such that (1) $F$ acts freely on $\Omega$, (2) $F'\cdot \omega = F\cdot \omega$ for all $\omega\in\Omega$, and (3) the stabilizer subgroups $(F')_\omega\leq F'$ are amenable for all $\omega\in\Omega$. Le Boudec's group $G(F, F')$ is the set of automorphisms $\alpha\in \text{Aut}(T_{\Omega})$ such that (1) $\sigma_c(\alpha, v)\in F'$ for all $v\in \text{vert}(T)$ and (2) $\sigma_c(\alpha, v)\in F$ for all but finitely many $v\in \text{vert}(T)$. The group $G(F, F')$ admits a canonical boundary action on a certain compact space $\mathcal{X}$ of ends in $T_{\Omega}$; see \cite{boudec}*{Section 3.1} for details.

In this section, we apply our results from previous sections to conclude that certain subgroups of Le Boudec's groups $G(F, F')$ (including the groups $G(F, F')$ themselves) do not have sub-rapid decay. Given a vertex $v\in \textup{vert}(T_{\Omega})$, we will denote by $E(v)$ the set of edges adjacent to $v$. An edge $e\in \textup{edge}(T_{\Omega})$ disconnects $T_{\Omega}$ into two subtrees, called the $e$-\textit{half-trees} of $T_{\Omega}$.

\begin{lem}\label{amenable trees moves}
Let $\Omega, F, F',$ and $G = G(F, F')$ be as above. Suppose an automorphism $\alpha\in G$ has the following properties:
\begin{enumerate}
    \item $\alpha$ fixes an edge $e_{\alpha}\in \textup{edge}(T_{\Omega})$;
    \item If $\alpha$ fixes an edge $e'_{\alpha}$ adjacent to $v_{\alpha} = o(e_{\alpha})$, then $\alpha$ acts trivially on the $e'_{\alpha}$-half-tree of $T_{\Omega}$ which does not contain $v_{\alpha}$;
    \item $\sigma_c(\alpha, v_{\alpha})\neq \id$.
\end{enumerate}
Let $E\subsetneq E(v_{\alpha})$ denote the set of all edges adjacent to $v_{\alpha}$ which are fixed by $\alpha$. Let $T_\alpha\subset T_{\Omega}$ \textup{(}resp. $T_{\alpha}^c\subset T_{\Omega}$\textup{)} denote the maximal sub-tree of $E_{\Omega}$ with $v_{\alpha}\in \textup{vert}(T_{\alpha})$ and $E(v_{\alpha})\backslash E$ disjoint to $\textup{edge}(T_{\alpha})$ \textup{(}resp. $E$ disjoint to $\textup{edge}(T_{\alpha})$\textup{)}.
If $\beta \in C_G(\alpha)$, then there are $\varphi,\psi \in C_G(\alpha)$ such that $\varphi|_{T_\alpha} = \id|_{T_\alpha}$, $\psi|_{T_\alpha^c} = \id|_{T_\alpha^c}$, and $\beta = \varphi \circ \psi = \psi \circ \varphi$.
\end{lem}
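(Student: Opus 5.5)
We will define $\varphi$ and $\psi$ by cutting $\beta$ along the vertex $v_\alpha$. The tree $T_\Omega$ splits at $v_\alpha$ into $T_\alpha$, which contains $v_\alpha$ and the half-trees through the edges of $E$, and $T_\alpha^c$, which contains $v_\alpha$ and the half-trees through the edges of $E(v_\alpha)\setminus E$. The two pieces meet only at $v_\alpha$.

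The first step is to show that $\beta$ fixes $v_\alpha$ and preserves the partition $E(v_\alpha) = E\sqcup (E(v_\alpha)\setminus E)$. For this we characterize $v_\alpha$ intrinsically in terms of $\alpha$. By (2), $\alpha$ is trivial on every half-tree beyond an edge of $E$, so $\textup{Fix}(\alpha)$ contains $T_\alpha$. By (3), $\alpha$ moves some edge at $v_\alpha$, so $E\neq E(v_\alpha)$. Since $\alpha$ fixes $v_\alpha$, it permutes the edges of $E(v_\alpha)\setminus E$ nontrivially, and hence moves every vertex of $T_\alpha^c$ other than $v_\alpha$ (a vertex fixed by $\alpha$ forces the first edge of the geodesic from $v_\alpha$ to be fixed). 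Therefore $\textup{Fix}(\alpha)=T_\alpha$, and $v_\alpha$ is the unique vertex of $\textup{Fix}(\alpha)$ incident to an edge not fixed by $\alpha$. As $\beta$ commutes with $\alpha$, it preserves $\textup{Fix}(\alpha)$ and its boundary structure. It follows that $\beta(v_\alpha)=v_\alpha$, $\beta(E)=E$, and consequently $\beta(T_\alpha)=T_\alpha$ and $\beta(T_\alpha^c)=T_\alpha^c$.

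Next we define
\[
\varphi = \beta \text{ on } T_\alpha^c,\quad \varphi=\id \text{ on } T_\alpha, \qquad \psi=\beta \text{ on } T_\alpha,\quad \psi=\id \text{ on } T_\alpha^c .
\]
Both maps agree at $v_\alpha$ because $\beta$ fixes it, so each is a well-defined tree automorphism. Since $\varphi$ and $\psi$ have disjoint supports and each preserves both pieces, we get $\beta=\varphi\circ\psi=\psi\circ\varphi$. To check commutation with $\alpha$, note that $\alpha$ preserves both pieces and is trivial on $T_\alpha$. Hence $\psi$ commutes with $\alpha$: on $T_\alpha$ both $\alpha$ and $\alpha\beta\alpha^{-1}$ reduce to $\beta|_{T_\alpha}$, and on $T_\alpha^c$ everything involved is the identity except $\alpha$. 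The restriction of $\varphi$ to $T_\alpha^c$ equals $\beta$, which commutes with $\alpha$ there, and on $T_\alpha$ both $\varphi$ and $\alpha$ are trivial. So $\varphi,\psi\in C_{\textup{Aut}}(\alpha)$.

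The last step, which we expect to be the main obstacle, is membership in $G(F,F')$. Away from $v_\alpha$, the local permutations $\sigma_c(\varphi,v)$ are either $\sigma_c(\beta,v)$ or $\id$. Both lie in $F'$, and all but finitely many lie in $F$, since $\id\in F$ and $\beta\in G$. At $v_\alpha$, however, $\sigma_c(\varphi,v_\alpha)$ equals $\sigma_c(\beta,v_\alpha)$ on the colors $c(E(v_\alpha)\setminus E)$ and is the identity on $c(E)$; the situation for $\psi$ is symmetric. Only this single vertex can spoil the finiteness condition, so condition (2) in the definition of $G(F,F')$ is automatic and the real task is to show $\sigma_c(\varphi,v_\alpha)\in F'$. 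We plan to use the hypotheses on $F'$ together with the fact that $\sigma:=\sigma_c(\beta,v_\alpha)$ commutes with $a:=\sigma_c(\alpha,v_\alpha)\in F'$, where $a$ fixes $c(E)$ pointwise, so that $a\in F'_\omega$ for each $\omega\in c(E)$. The key point is that $\sigma$ preserves the fixed-point set $c(E)$ of $a$ as well as its complement, and we would try to show that the piece of $\sigma$ on $c(E)$ can be realized by an element of $F'$. If this does not follow directly from the hypotheses on $F,F'$ as stated, we will adjust the splitting: correct $\psi$ at $v_\alpha$ by composing with a suitable element of $F'$ that agrees with $\sigma$ on $c(E)$, and absorb the discrepancy into $\varphi$ deeper in the tree, using freeness of $F$ to propagate colors consistently.
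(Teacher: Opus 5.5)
Your first three steps ($\beta(v_\alpha)=v_\alpha$ and $\beta(E)=E$, the cut-and-paste definition of $\varphi,\psi$, and the checks that they commute with $\alpha$ and satisfy $\beta=\varphi\circ\psi=\psi\circ\varphi$) coincide with the paper's proof. The gap is the step you leave open: showing $\sigma_c(\varphi,v_\alpha)\in F'$, and likewise for $\psi$. The paper does not really handle this either. It asserts $\sigma_c(\varphi,w)=\sigma_c(\id,w)$ for $w\in\textup{vert}(T_\alpha)$ and $\sigma_c(\varphi,w)=\sigma_c(\beta,w)$ for $w\in\textup{vert}(T_\alpha^c)$, which overlooks that $v_\alpha$ lies in both. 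Your fallback cannot work, because the local permutation at $v_\alpha$ is forced. The condition $\varphi|_{T_\alpha}=\id$ gives $\varphi(e)=e$ for $e\in E$, while $\psi|_{T_\alpha^c}=\id$ together with $\beta=\varphi\circ\psi$ gives $\varphi(e)=\beta(e)$ for $e\in E(v_\alpha)\setminus E$. Hence every admissible $\varphi$ has $\sigma_c(\varphi,v_\alpha)$ equal to the cut permutation: $\sigma_c(\beta,v_\alpha)$ off $c(E)$, and the identity on $c(E)$. Correcting by elements of $F'$ at $v_\alpha$, or modifying deeper in the tree, would violate the required support conditions.

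Moreover, the cut permutation need not lie in $F'$, so the statement is false as written. Let $V=(\bZ/2)^2$ and $\Omega=V\times\{0,1\}$, and let $(x,y)\in V^2$ act by $(u,0)\mapsto(u+x,0)$ and $(u,1)\mapsto(u+y,1)$. Fix $t\neq 0$ and $s\notin\{0,t\}$ in $V$, and put $F=\{(x,x):x\in V\}$ and $F'=\{(x,y): x+y\in\{0,t\}\}$. Then $F$ acts freely, $F$ and $F'$ both have orbits $V\times\{0\}$ and $V\times\{1\}$, and $F'$ is finite and abelian.

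Let $\alpha$ fix a vertex $v$ with $\sigma_c(\alpha,v)=(0,t)$. Let it act trivially beyond the edges at $v$ coloured in $V\times\{0\}$, and have $\sigma_c(\alpha,w)=(t,t)$ at every vertex $w\neq v$ beyond the remaining edges. This is a well-defined element of $G$ satisfying (1)--(3), with $v_\alpha=v$ and $c(E)=V\times\{0\}$. Let $\beta$ fix $v$ with $\sigma_c(\beta,w)=(s,s)$ for all $w$, so $\beta\in G$. Now $\beta$ fixes $v$ and preserves $T_\alpha$ and $T_\alpha^c$, and $\sigma_c(\alpha,\cdot)$ is constant on each of these off $v$. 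So the cocycle identity gives $\sigma_c(\alpha\beta,w)=\sigma_c(\alpha,w)(s,s)=(s,s)\sigma_c(\alpha,w)=\sigma_c(\beta\alpha,w)$ for all $w$. Since both $\alpha\beta$ and $\beta\alpha$ fix $v$, $\beta\in C_G(\alpha)$. But the forced value $\sigma_c(\varphi,v)=(0,s)$ is not in $F'$, because $s\notin\{0,t\}$.

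So no argument can close your last step. The lemma needs either an additional hypothesis on $F'$ or a weaker conclusion. A suitable hypothesis is closure under restricting elements of $C_{F'}(\sigma_c(\alpha,v_\alpha))$ to $\textup{Fix}(\sigma_c(\alpha,v_\alpha))$, which holds e.g. for $F'=\textup{Sym}(\Omega)$ with $\Omega$ finite. A suitable weaker conclusion is $\varphi,\psi\in C_{\textup{Aut}(T_\Omega)}(\alpha)$, which your argument does establish.
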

\begin{proof}
The conditions on $\alpha$ imply that $T_{\alpha}$ and $T_{\alpha}^c$ are nonempty $\alpha$-invariant sub-trees of $T_{\Omega}$ with $T_{\alpha}\cap T_{\alpha}^c = \{v_{\alpha}\}$ and $T_{\alpha}\cup T_{\alpha}^c = T_{\Omega}$. The automorphism $\alpha$ restricts to a trivial action on $T_{\alpha}$ and a free action on $T_{\alpha}^c$.

Let $\beta \in C_G(\alpha)$. We claim that  $\beta(v_{\alpha}) = v_{\alpha}$. If not, then (since $\alpha$ acts freely on $T_{\alpha}^c$) there would exist a vertex $w \in \text{vert}(T_\alpha^c)$ such that $\beta(w) \in \text{vert}(T_\alpha)$. Again, since $\alpha$ acts freely on $T_\alpha^c$, we have that $\alpha(w) \neq w$. However,
\begin{equation*}
  \beta(\alpha(w)) = \alpha(\beta(w)) = \beta(w),
\end{equation*}
so $\alpha(w) = w$, a contradiction.

Consider the endomorphisms $\varphi$ and $\psi$ of $T_{\Omega}$ determined on $\text{vert}(T_{\Omega})$ by the formulas
\begin{equation*}
    \varphi(w) = \begin{cases}
        w & w \in \text{vert}(T_\alpha) \\
        \beta(w) & w \in \text{vert}(T_\alpha^c)
    \end{cases}\qquad\text{and}\qquad\psi(w) = \begin{cases}
        \beta(w) & w \in \text{vert}(T_\alpha) \\
        w & w \in \text{vert}(T_\alpha^c)
    \end{cases}.
\end{equation*}
Because $\alpha\beta = \beta\alpha$, $\beta(T_{\alpha}) = T_{\alpha}$, $\beta(T_{\alpha}^c) = T_{\alpha}^c$, and $\beta(v_{\alpha}) = v_{\alpha}$, it is immediate that the maps $\varphi$ and $\psi$ are well-defined automorphisms of $T_{\Omega}$ which commute with $\alpha$ and satisfy the relations $\beta = \varphi \circ \psi = \psi \circ \varphi$. 

It remains to see why $\varphi, \psi\in G$, but this is quickly observed: if $w \in \text{vert}(T_\alpha)$, then $\sigma_c(\varphi,w) = \sigma_c(\id,w)$. Thus, for such $w$, $\sigma(\varphi,w) \in F'$ and $\sigma(\varphi,w) \in F$ for all but finitely many $w\in \text{vert}(T_{\alpha})$. If instead $w \in \text{vert}(T_\alpha^c)$, then we have $\sigma(\varphi,w) = \sigma(\beta,w)$, so $\sigma(\varphi,w)\in F'$ and $\sigma(\varphi,w) \in F$ for all but finitely many $w\in \text{vert}(T_{\alpha})$. Since $\text{vert}(T_{\alpha})\cup \text{vert}(T_{\alpha^c}) = \text{vert}(T_{\Omega}),$ this shows that $\varphi \in G$. A similar argument shows that $\psi \in G$. Thus, $\varphi, \psi\in C_G(\alpha)$, as desired.
\end{proof}

\begin{lem}\label{amenable centralizer}
If $\alpha\in G$ satisfies the conditions in Lemma~\ref{amenable trees moves}, then $\textup{Fix}(\alpha)$ has non-empty interior and $C_G(\alpha)$ is amenable.

\begin{proof}
The first assertion is immediate from conditions (1) and (2) of Lemma~\ref{amenable trees moves}.

It follows from the last part of Lemma~\ref{amenable trees moves} that $C_G(\alpha)\leq \textup{Fix}(T_\alpha)\times \textup{Fix}(T_\alpha^c)$. By \cite{boudec}*{Proposition 5.4}, both groups $\textup{Fix}(T_\alpha)$ and $\textup{Fix}(T_\alpha^c)$ are amenable, hence $C_G(\alpha)$ is amenable.
\end{proof}
\end{lem}

\begin{thm}\label{lbgnotsrd}
Every Le Boudec's group $G = G(F, F')$ contains an element $\alpha$ satisfying the hypotheses of Lemma~\ref{amenable trees moves}.
If $H\leq G$ is a finitely generated non-amenable subgroup that contains such an element $\alpha$, and if there is a nonempty open set $U\subset \textup{Fix}(\alpha)\subset \partial T_\Omega$ such that $H$ acts minimally on $\overline{H\cdot U}\subset \partial T_\Omega$, then $H$ does not have \textup{(SRD)}.
\end{thm}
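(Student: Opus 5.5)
The proof splits into two parts: an explicit construction of $\alpha$, and a contradiction obtained from Corollary~\ref{cor:Oles}.

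\emph{Existence of $\alpha$.} Since $F\le F'$ acts freely on $\Omega$ and $|\Omega|\ge 3$, I take a nontrivial $\sigma\in F$ (if $F$ were trivial, I would instead use an element of $F'$ with a prescribed orbit structure). I then build $\alpha$ fixing a vertex $v_\alpha$ with $\sigma_c(\alpha,v_\alpha)=\sigma$.

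The free action of $F$ gives no fixed colors, which breaks condition (1). To fix this I use $F'$: by property (2) of $F'$ I pick $\sigma'\in F'$ fixing some color $\omega$ but not trivial. Such an element exists in Le Boudec's setting, since otherwise $G(F,F')$ would be the case $F=F'$, which is excluded by $F<F'$.

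Next I define $\alpha$ on the tree:
\begin{itemize}
\item at $v_\alpha$, it permutes the edges according to $\sigma'$;
\item on each fixed edge's far half-tree, it acts as the identity (all local permutations are $\id\in F$);
\item on the moved edges' half-trees, it maps them to each other using the canonical color-preserving isomorphisms, so the local permutations there are trivial.
\end{itemize}
Only one vertex then carries a permutation outside $F$, so $\alpha\in G$. Conditions (1)–(3) of Lemma~\ref{amenable trees moves} hold by construction.

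\emph{Non-SRD of $H$.} I argue by contradiction and suppose $H$ has SRD. By Lemma~\ref{amenable centralizer}, $C_G(\alpha)$ is amenable, so $C_H(\alpha)=C_G(\alpha)\cap H$ is amenable. Since $H$ is non-amenable, an amenable subgroup cannot be co-amenable in $H$: co-amenability of an amenable subgroup forces the ambient group to be amenable. So $C_H(\alpha)$ is not co-amenable.

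Now let $\mathcal{X}=\overline{H\cdot U}$. This is a compact $H$-invariant set on which $H$ acts minimally by hypothesis. Its neighborhood stabilizers are contained in stabilizers of boundary points in $G$, which are amenable by \cite{boudec}. Moreover, $\alpha$ fixes the nonempty open set $U\subset\mathcal{X}$, so $\alpha$ does not act topologically freely on $\mathcal{X}$. This contradicts Corollary~\ref{cor:Oles} applied to $H$ and $g=\alpha$.

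The main obstacle is justifying that the neighborhood stabilizers (or stabilizers) of points in $\partial T_\Omega$ are amenable for the subgroup $H$. I expect this to follow from \cite{boudec}*{Proposition 5.4}-type results: stabilizers of ends or half-trees in $G(F,F')$ are amenable because condition (3) makes $(F')_\omega$ amenable. A secondary issue is ensuring the constructed $\alpha$ actually lies in $G$, which needs care with the finiteness condition.
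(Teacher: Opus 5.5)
Your second part is exactly the paper's argument: $C_H(\alpha)\le C_G(\alpha)$ is amenable, hence not co-amenable in the non-amenable group $H$; the space $\mathcal{X}=\overline{H\cdot U}$ is minimal with amenable stabilizers inherited from $G\act\partial T_\Omega$; and $\alpha$ fixes the nonempty open set $U$, which contradicts Corollary~\ref{cor:Oles}. The gap is in your construction of $\alpha$, namely the third bullet. Let $e\in E(v_\alpha)$ have color $\omega_1$ with $\omega_2:=\sigma'(\omega_1)\neq\omega_1$, and let $w$ be the far endpoint of $e$. Then $\alpha$ sends the edge $e\in E(w)$ (color $\omega_1$) to $\alpha(e)\in E(\alpha(w))$ (color $\omega_2$). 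Hence $\sigma_c(\alpha,w)(\omega_1)=\omega_2$, so $\sigma_c(\alpha,w)\neq\id$. In fact no color-preserving isomorphism between the two half-trees exists: the edges at the two roots carry the colors $\Omega\setminus\{\omega_1\}$ and $\Omega\setminus\{\omega_2\}$ respectively. You also cannot simply place $\sigma'$, or any other element of $F'\setminus F$, at every vertex of the moved half-trees, because that violates the finiteness condition defining $G(F,F')$. So your claim that ``only one vertex carries a permutation outside $F$'' is unjustified as written, and membership $\alpha\in G$ is not established.

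The repair uses the axiom $F'\cdot\omega=F\cdot\omega$ a second time. Since $\omega_2\in F'\omega_1=F\omega_1$, choose $f_e\in F$ with $f_e(\omega_1)=\omega_2$. Then define $\alpha$ on the half-tree beyond $e$ with constant local permutation $f_e$. This is consistent level by level: at a vertex entered through an edge of color $\omega$, the only constraint is $\omega\mapsto f_e(\omega)$. With this choice, every local permutation away from $v_\alpha$ lies in $F$, so $\alpha\in G$ and conditions (1)--(3) hold. Your existence argument for $\sigma'$ is fine once spelled out: $F'\omega=F\omega$ gives $F'=F\cdot F'_\omega$, so $F'_\omega\neq\{\id\}$ because $F\neq F'$. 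After this fix, your route differs from the paper's only in the first part. The paper starts from some $\alpha_0\in G$ fixing an edge $e_0$ with $\sigma_c(\alpha_0,o(e_0))\neq\id$, and replaces it by the identity on the half-trees beyond the fixed edges at $o(e_0)$. It keeps $\alpha_0$ on the moved half-trees, so it never has to build them by hand. Your corrected construction is precisely what produces such an $\alpha_0$, which the paper takes for granted.
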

\begin{proof}

Consider an element $\alpha_0 \in G(F,F')$ which fixes an edge $e_{0}\in \text{edge}(T_{\Omega})$ and is such that $\sigma_c(\alpha, o(e_{0})) \neq \id$. Set $v_{0} = o(e_{0})$, and $E_0 = \{e \in E(v): \alpha_0(e) = e\}$. Because $\sigma_c(\alpha_0, v_0) \neq \id$, we have $E_\alpha \neq E(v_0)$. For every $e \in E(v_0)$, let $T_e$ be the $e$-half-tree of $T_{\Omega}$ which does not contain $v_{0}$. Consider the automorphism $\alpha$ of $T_{\Omega}$ determined on $\text{vert}(T_{\Omega})$ by the formula
\begin{equation*}
    \alpha(w) := \begin{cases}
        w & w \in \bigcup_{e \in E_0} \text{vert}(T_e) \\
        \alpha_0(w) & \text{otherwise}.
    \end{cases}
\end{equation*}
For every $w \in \bigcup_{e \in E_0} \textup{vert}(T_e)$, we have $\sigma_c(\alpha,w) = \sigma_c(\id,w) = \id \in F'$, and for all $w \not\in \bigcup_{e \in E_0} \textup{vert}(T_e)$, we have $\sigma_c(\alpha,w) = \sigma_c(\alpha_0,w) \in F'$. Similarly, $\sigma_c(\alpha,w)\in F'$ for all but finitely many $w\in \text{vert}(T_{\Omega})$, so that $\alpha\in G$. 

Now assume $H\le G$ is non-amenable with $\alpha\in H$, and assume that there is a nonempty open subset $U\subset \text{Fix}(\alpha)\subset \partial T_\Omega$ such that $H$ acts minimally on $\mathcal{X}:= \overline{H\cdot U}\subset \partial T_\Omega$. Since the stabilizer subgroups for the action $G\act \partial T_\Omega$ are all amenable, the same holds for the action $H\act \mathcal{X}$. By Lemma~\ref{amenable centralizer}, $C_H(\alpha)\le C_G(\alpha)$ is amenable. In particular, since $H$ is non-amenable, $C_H(\alpha)$ is not co-amenable in $H$. Hence, it follows from Corollary~\ref{cor:Oles} that $H$ does not have (SRD).
\end{proof}

\nocite{*}
\bibliographystyle{plain}
\bibliography{agkpreferences}

@article {BKKO2017,
    AUTHOR = {Breuillard, Emmanuel and Kalantar, Mehrdad and Kennedy,
              Matthew and Ozawa, Narutaka},
     TITLE = {\textup{C}$^*$-simplicity and the unique trace property for discrete groups},
   JOURNAL = {Publ. Math. Inst. Hautes \'Etudes Sci.},
  FJOURNAL = {Publications Math\'ematiques. Institut de Hautes \'Etudes
              Scientifiques},
    VOLUME = {126},
      YEAR = {2017},
     PAGES = {35--71},
      ISSN = {0073-8301,1618-1913},
   MRCLASS = {46L10 (20C07 37A55 46L89)},
  MRNUMBER = {3735864},
MRREVIEWER = {Anton\ Deitmar},
       DOI = {10.1007/s10240-017-0091-2},
       URL = {https://doi.org/10.1007/s10240-017-0091-2},
}

@article{gw_urs,
author = {Glasner, Eli and Weiss, Benjamin},
year = {2014},
month = {02},
pages = {},
title = {Uniformly recurrent subgroups},
isbn = {9781470409319},
doi = {10.1090/conm/631/12596}
}

@article{boudec,
author = {Le Boudec, Adrien},
year = {2016},
month = {12},
pages = {},
title = {\textup{C}$^*$-simplicity and the amenable radical},
volume = {209},
journal = {Inventiones mathematicae},
doi = {10.1007/s00222-016-0706-0}
}

@article{LeB-MBon18,
     author = {Le Boudec, Adrien and Matte Bon, Nicol\'as},
     title = {Subgroup dynamics and \textup{C}$^*$-simplicity  of groups of homeomorphisms},
     journal = {Annales scientifiques de l'\'Ecole Normale Sup\'erieure},
     pages = {557--602},
     year = {2018},
     publisher = {Soci\'et\'e Math\'ematique de France. Tous droits r\'eserv\'es},
     volume = {Ser. 4, 51},
     number = {3},
     doi = {10.24033/asens.2361},
     mrnumber = {3831032},
     language = {en},
     url = {https://www.numdam.org/articles/10.24033/asens.2361/}
}

@misc{Oz25,
      title={Proximality and selflessness for group \textup{C}$^*$-algebras}, 
      author={Narutaka Ozawa},
      year={2025},
      eprint={2508.07938},
      archivePrefix={arXiv},
      primaryClass={math.OA},
      url={https://arxiv.org/abs/2508.07938}, 
}

@article{hart-kalantar-1,
    AUTHOR = {Hartman, Yair and Kalantar, Mehrdad},
     TITLE = {Stationary $\textup{C}^*$-dynamical systems},
      NOTE = {With an appendix by Uri Bader, Hartman and Kalantar},
   JOURNAL = {J. Eur. Math. Soc. (JEMS)},
  FJOURNAL = {Journal of the European Mathematical Society (JEMS)},
    VOLUME = {25},
      YEAR = {2023},
    NUMBER = {5},
     PAGES = {1783--1821},
      ISSN = {1435-9855,1435-9863},
   MRCLASS = {46L05 (20C07 37A55 46L35 46L55)},
  MRNUMBER = {4592860},
MRREVIEWER = {Catalin\ Badea},
       DOI = {10.4171/jems/1225},
       URL = {https://doi.org/10.4171/jems/1225},
}

@article{KK14,
url = {https://doi.org/10.1515/crelle-2014-0111},
title = {Boundaries of reduced $\textup{C}^*$-algebras of discrete groups},
author = {Mehrdad Kalantar and Matthew Kennedy},
pages = {247--267},
volume = {2017},
number = {727},
journal = {Journal für die reine und angewandte Mathematik (Crelles Journal)},
doi = {doi:10.1515/crelle-2014-0111},
year = {2017},
lastchecked = {2025-06-23}
}

@article{Ken15,
  title={An intrinsic characterization of $\textup{C}^*$-simplicity},
  author={Matthew Kennedy},
  journal={Annales scientifiques de l'{\'E}cole normale sup{\'e}rieure},
  year={2015},
  url={https://api.semanticscholar.org/CorpusID:119613577}
}

@article{gong2015,
    author = {Gong, Sherry},
    title = {Finite part of operator {K}-theory for groups with rapid decay},
    journal = {Journal of Noncommutative Geometry},
    year = {2015},
    volume  = {9},
    number  = {3},
    pages   = {697--706},
    publisher = {European Mathematical Society}
}

@phdthesis{Olesen2016,
    author ={Olesen, Kristian K.} ,
    title = {Analytic Aspect of The {T}hompson Groups},
    school = {Department of Mathematical Sciences, University of Copenhagen},
    year = {2017}
}

@book{Woess, 
    place={Cambridge}, 
    series={Cambridge Tracts in Mathematics}, 
    title={Random Walks on Infinite Graphs and Groups}, 
    publisher={Cambridge University Press}, 
    author={Woess, Wolfgang}, 
    year={2000}, 
    collection={Cambridge Tracts in Mathematics}
}

@incollection {Furstenberg73,
    AUTHOR = {Furstenberg, Harry},
     TITLE = {Boundary theory and stochastic processes on homogeneous
              spaces},
 BOOKTITLE = {Harmonic analysis on homogeneous spaces ({P}roc. {S}ympos.
              {P}ure {M}ath., {V}ol. {XXVI}, {W}illiams {C}oll.,
              {W}illiamstown, {M}ass., 1972)},
    SERIES = {Proc. Sympos. Pure Math.},
    VOLUME = {Vol. XXVI},
     PAGES = {193--229},
 PUBLISHER = {Amer. Math. Soc., Providence, RI},
      YEAR = {1973},
   MRCLASS = {22E40 (60J50)},
  MRNUMBER = {352328},
MRREVIEWER = {R.\ G.\ Laha},
}

@article{Day,
author = {Day, Mahlon M.},
year = {1957},
month = {},
pages = {509-544},
title = {Amenable Semigroups},
volume = {1},
journal = {Illinois Journal of Mathematics},
doi = {https://doi.org/10.1215/ijm/1255380675}
}

@article{Furm,
author = {Furman, Alex},
year = {2003},
month = {},
pages = {173-187},
title = {On minimal, strongly proximal actions of locally compact groups},
volume = {136},
journal = {Israel Journal of Mathematics},
doi = {10.1007/BF02807197}
}

@article{joli,
 ISSN = {00029947, 10886850},
 URL = {http://www.jstor.org/stable/2001458},
 author = {Paul Jolissaint},
 journal = {Transactions of the American Mathematical Society},
 number = {1},
 pages = {167--196},
 publisher = {American Mathematical Society},
 title = {Rapidly Decreasing Functions in Reduced $\textup{C}^*$-Algebras of Groups},
 urldate = {2025-06-11},
 volume = {317},
 year = {1990}
}

@misc{KS2025,
      title={More examples of additivity violation of the regularized minimum output entropy in the commuting-operator setup}, 
      author={Mehrdad Kalantar and Homayoon Shobeiri},
      year={2025},
      eprint={2501.15462},
      archivePrefix={arXiv},
      primaryClass={math.OA},
      url={https://arxiv.org/abs/2501.15462}, 
}

@article {Jus23,
    AUTHOR = {Juschenko, Kate},
     TITLE = {Liouville property of strongly transitive actions},
   JOURNAL = {Proc. Amer. Math. Soc.},
  FJOURNAL = {Proceedings of the American Mathematical Society},
    VOLUME = {151},
      YEAR = {2023},
    NUMBER = {9},
     PAGES = {4047--4053},
      ISSN = {0002-9939,1088-6826},
   MRCLASS = {60G50 (20P05 60B15 82B41)},
  MRNUMBER = {4607647},
MRREVIEWER = {J.\ A.\ van Casteren},
}

@article {Hart-Tamuz,
    AUTHOR = {Hartman, Yair and Tamuz, Omer},
     TITLE = {Stabilizer rigidity in irreducible group actions},
   JOURNAL = {Israel J. Math.},
  FJOURNAL = {Israel Journal of Mathematics},
    VOLUME = {216},
      YEAR = {2016},
    NUMBER = {2},
     PAGES = {679--705},
      ISSN = {0021-2172,1565-8511},
   MRCLASS = {22D05 (22E46 22F10 37A15)},
  MRNUMBER = {3557462},
MRREVIEWER = {George\ A.\ Willis},
       DOI = {10.1007/s11856-016-1424-4},
       URL = {https://doi.org/10.1007/s11856-016-1424-4},
}

@book {Benj,
    AUTHOR = {Benjamini, Itai},
     TITLE = {Coarse geometry and randomness},
    SERIES = {Lecture Notes in Mathematics},
    VOLUME = {2100},
      NOTE = {Lecture notes from the 41st Probability Summer School held in
              Saint-Flour, 2011,
              Chapter 5 is due to Nicolas Curien, Chapter 12 was written by
              Ariel Yadin, and Chapter 13 is joint work with Gady Kozma,
              \'Ecole d'\'Et\'e{} de Probabilit\'es de Saint-Flour.
              [Saint-Flour Probability Summer School]},
 PUBLISHER = {Springer, Cham},
      YEAR = {2013},
     PAGES = {viii+129},
      ISBN = {978-3-319-02575-9; 978-3-319-02576-6},
   MRCLASS = {05-06 (05C10 05C25 05C50 05C62 05C80)},
  MRNUMBER = {3156647},
       DOI = {10.1007/978-3-319-02576-6},
       URL = {https://doi.org/10.1007/978-3-319-02576-6},
}

@article{urs,
    title={Realizing uniformly recurrent subgroups}, 
    volume={40}, 
    DOI={10.1017/etds.2018.47}, 
    number={2}, 
    journal={Ergodic Theory and Dynamical Systems}, 
    author={MATTE BON, NICOLÁS and TSANKOV, TODOR}, 
    year={2020}, 
    pages={478–489}
}

@article {KaiVer,
    AUTHOR = {Kaimanovich, V. A. and Vershik, A. M.},
     TITLE = {Random walks on discrete groups: boundary and entropy},
   JOURNAL = {Ann. Probab.},
  FJOURNAL = {The Annals of Probability},
    VOLUME = {11},
      YEAR = {1983},
    NUMBER = {3},
     PAGES = {457--490},
      ISSN = {0091-1798,2168-894X},
   MRCLASS = {60B15 (43A07 60J15)},
  MRNUMBER = {704539},
MRREVIEWER = {Yves\ Derriennic},
       URL =
              {http://links.jstor.org/sici?sici=0091-1798(198308)11:3<457:RWODGB>2.0.CO;2-5&origin=MSN},
}

@article {Rosn,
    AUTHOR = {Rosenblatt, Joseph},
     TITLE = {Ergodic and mixing random walks on locally compact groups},
   JOURNAL = {Math. Ann.},
  FJOURNAL = {Mathematische Annalen},
    VOLUME = {257},
      YEAR = {1981},
    NUMBER = {1},
     PAGES = {31--42},
      ISSN = {0025-5831,1432-1807},
   MRCLASS = {43A05 (28C10 60B15 60J15)},
  MRNUMBER = {630645},
MRREVIEWER = {Yves\ Guivarc'h},
       DOI = {10.1007/BF01450653},
       URL = {https://doi.org/10.1007/BF01450653},
}

@misc{SriGreg-SRD,
      title={Some remarks on decay in countable groups and amalgamated free products}, 
      author={Srivatsav Kunnawalkam Elayavalli and Gregory Patchell and Lizzy Teryoshin},
      year={2025},
      eprint={2509.08754},
      archivePrefix={arXiv},
      primaryClass={math.GR},
      url={https://arxiv.org/abs/2509.08754}, 
}

\end{document}